\newtheorem{theo}{Theorem}[section]
\newtheorem{defin}[theo]{Definition}
\newtheorem{prop}[theo]{Proposition}
\newtheorem{lemm}[theo]{Lemma}
\newtheorem{rem}[theo]{Remark}
\numberwithin{equation}{section}
\newcommand{\set}[1]{{\left\lbrace #1 \right\rbrace}}
\newcommand{\norm}[1]{{\left\Vert #1 \right\Vert}}
\newcommand{\bD}{\mathbf{D}}
\newcommand{\bF}{\mathbb{F}}
\newcommand{\bI}{\mathbb{I}}
\newcommand{\Z}{\mathbb{Z}}
\newcommand{\cS}{{\mathcal S}}
\newcommand{\al}{\alpha}
\newcommand{\be}{\beta}
\newcommand{\ga}{\gamma}
\newcommand{\Ga}{\Gamma}
\newcommand{\om}{\omega}
\newcommand{\Om}{\Omega}
\newcommand{\si}{\sigma}
\newcommand{\ep}{\epsilon }
\newcommand{\te}{\theta}
\newcommand{\De}{\Delta}
\newcommand{\de}{\delta}
\newcommand{\R}{{\mathbb R}^n}
\newcommand{\ri}{\rightarrow}
\newcommand{\Rn}{{\mathbb R}^{n}}
\newcommand{\na}{\nabla}
\newcommand{\intRn}{\int_{{\mathbb R}^n}}
\DeclareMathOperator*{\divg}{div\,}
\begin{document}
\baselineskip=18pt

\title[]{Global existence of solutions of the stochastic incompressible non-Newtonian fluid models}

\
\author{Tongkeun Chang}

\author{Minsuk Yang}

\address{Department of Mathematics, Yonsei University \\
Seoul, 136-701, South Korea}

\email{C.: chang7357@yonsei.ac.kr, Y.: m.yang@yonsei.ac.kr}

\begin{abstract}
In this paper, we study the existence of solutions of stochastic incompressible non-Newtonian  fluid models in $\R$.  For the  existence of solutions, we assume that the extra stress tensor $S$ is represented by $S({\mathbb A}) = {\mathbb F} ( {\mathbb A}) {\mathbb A}$ for $ n \times n$ matrix ${\mathbb G}$. We assume that ${\mathbb F}(0) $ is uniformly elliptic matrix and
\begin{align*}
|{\mathbb F}({\mathbb G})|, \,\, 
 | D {\mathbb F} ({\mathbb G})|, \,\, | D^2({\mathbb F} ({\mathbb G})  ){\mathbb G}| \leq c \quad \mbox{for all} \quad 0 <  |{\mathbb G}| \leq r_0
 \end{align*}
 for some $r_0 > 0$.
Note that ${\mathbb F}_1$ and  ${\mathbb F}_2$ for  $ d \in {\mathbb  R}$, and   ${\mathbb F}_3$ for $d \geq 3$  introduced in \eqref{0207-1} satisfy our assumption.\\

\noindent
 2000 {\em Mathematics Subject Classification:} primary 35K61, secondary 76D07. 

\noindent {\it Keywords and Phrases: Stochastic incompressible non-Newtonian fluids, Global existence of weak solution.}
\end{abstract}

\maketitle

\section{Introduction}
\setcounter{equation}{0}

In this paper, we study  the existence of solutions to stochastic incompressible non-Newtonian fluid models governed by the system:
\begin{equation}
\label{maineq2}
\begin{split}
d u &= ( \divg ( \bF (\bD u) \bD u - u\otimes u ) - \na p ) dt + g dB_t, \quad 
\divg u = 0, \quad u|_{t =0} = u_0
\end{split}
\end{equation}
for $(t, x) \in (0, \infty) \times \R$, $ n \geq 2$ with  the given initial condition   $u_0$   and   random noise  $g(t, x)dB_t$, where $B_t$ denotes Brownian motion.
Here, the diffusion term $\divg ( \bF (\bD u) \bD u)$ models the non-Newtonian stress tensor. 
The symmetric gradient $\bD u$ is defined as
\[
\bD u = \frac12 (\na u + (\na u)^T),
\]
where $(\na u)^T$ is the transpose of the gradient matrix $\na u$.
When $\bF(\bD u) = \bI$ ( the $n \times n$ identity matrix), the system  \eqref{maineq2} reduces to the stochastic Navier-Stokes equations, which describe standard Newtonian fluids.

Common choices for the stress tensor $\bF(\bD u)$ include:
\begin{equation}
\label{0207-1}
\begin{split}
\bF_1(\bD u)& = (\mu_0 + \mu_1|\bD u|)^{d -2} \bI, \\
\bF_2(\bD u)& = (\mu_0 + \mu_1 |\bD u|^2)^{\frac{d-2}2} \bI, \\
\bF_3(\bD u) &= (\mu_0 + \mu_1|\bD u|^{d-2} ) \bI,
\end{split}
\end{equation}
where $\mu_0$ and $\mu_1$ are positive constants.
The norm of a matrix ${\mathbb A} = (a_{ij})_{1 \leq ij \leq n}$ is given by $|{\mathbb A}| = \big( \sum_{1\leq i,j \leq n} |a_{ij}|^2 \big)^\frac12$.
The parameter $d$ characterizes the fluid's behavior. 
Shear-thinning fluids correspond to $1 < d < 2$ and shear-thickening fluids correspond to $2 < d < \infty$.
 
We review prior results on the existence of solutions relevant to our study. 
The deterministic incompressible non-Newtonian fluid equations have been extensively studied by many mathematicians.
Ladyžhenskaja (\cite{La0,La}) was the first to establish the existence of weak solutions for \eqref{maineq2} in bounded domains under the no-slip boundary condition $( u|_{x_n} =0)$.
Many prominent mathematicians, including Nečas, have made significant contributions to this topic (see \cite{Am,  BW,  BD, FMS, KKK, MNR, V, V2, VKR, Wol}).

Pokorný \cite{Pok} studied deterministic incompressible non-Newtonian fluids for the type ${\mathbb F}_3$ in $\R$. 
He established the local existence of weak solutions to \eqref{maineq2} for $d > 1$ and proved the uniqueness of weak solutions for $d > 1 + \frac{2n}{n+2}$ when $n \leq 3$. 
Kang et al. \cite{KKK} demonstrated both the local and global existence of regular solutions to \eqref{maineq2} for type ${\mathbb F}_2$. 
Bae and Kang \cite{BK} further established the existence of weak solutions to \eqref{maineq2} for the  type  ${\mathbb F}_2$ under specific conditions. 
More recently, Chang and Jin \cite{CJ-jmp} proved the global existence of weak solutions to \eqref{maineq2} for the  types ${\mathbb F}_1$ and ${\mathbb F}_2$ for all $ d \in {\mathbb R}$, and for type ${\mathbb F}_3$ when $ d \geq 3$. 

For stochastic partial differential equations in $\R$, Du and Zhang \cite{MR4002154} established the existence of solutions to the stochastic Navier–Stokes equations in Chemin–Lerner-type Besov spaces.
More recently, Chang and Yang \cite{CY}  proved the existence of solutions to the stochastic Navier–Stokes equations in weighted $L^p$-spaces.

Motivated by these results, we aim to establish the existence of solutions to stochastic incompressible non-Newtonian fluid models in $\R$.
We emphasize that our solutions are constructed in standard Besov spaces.

To proceed, we assume that $\bF(\bD u)$ satisfies the following regularity and structural properties:
{\bf Assumption}
\begin{enumerate}\label{assumption}
\item
$|\bF({\mathbb G})|, \,\, |D \bF({\mathbb G})|,\,\, |D^2 \big( \bF({\mathbb G})  \big) {\mathbb G} | \leq c $ for $ 0 < |{\mathbb G}| \leq r_0$ for some $ 0<r_0$.
\item
$\bF(0)$ is uniformly elliptic, meaning that there exist positive constants $c_1$ and $c_2$ such that $c_1|\xi|^2 \leq  \xi_i\bF(0)_{ij} \xi_j \leq c_2 |\xi|^2$ for all $ \xi \in \R$.
\end{enumerate}

It is worth noting that the functions
$\bF_1$ and $\bF_2$ for   $d \in {\mathbb R}$, as well as $\bF_3$ for $3 \leq d$ in \eqref{0207-1}, satisfy these assumptions. 
This work extends existing results by relaxing regularity constraints while maintaining physical relevance in modeling non-Newtonian fluids subjected to stochastic forces.

Before presenting our main result, we introduce several notations for function spaces to express various mathematical formulas concisely.
Let 
\begin{align}\label{def1}
{\mathbb T} : = \dot {\mathbb B}^{\al-\frac{2}{p} -2a}_{p,q}({\mathbb
R}^{n}) \, \cap \,\dot {\mathbb B}^{ -2a}_{p,q}(\Rn) \, \cap \, \dot {\mathbb W}_\infty^1 (\R)
\end{align}
and 
\begin{align}\label{def2}
\begin{split}
{\mathcal T} : 
&= {\mathcal L}_{a}^{q}( \Om\times (0,\infty), {\mathcal P}; \dot B^{\al-1}_{p,q} (\R)) \cap {\mathcal L}^{q}_{a_2} (\Om \times (0, \infty); {\mathcal P}, L^{p_2}(\R)) \\
&\quad \cap {\mathcal L}^{q} (\Om \times (0, \infty); {\mathcal P}, \dot H^1_p(\R)) \cap {\mathcal L}^{q} (\Om \times (0, \infty); {\mathcal P}, \dot H^2_p(\R)),
\end{split}
\end{align}
where  $ 1 < \al < 2$,   $n< p< \infty$, $ 2 \leq  q < \infty$ such that   
\begin{align}\label{0721-2}
\frac12 - \frac{n}{2p} -\frac1q  =a \geq 0, \quad p_2 \leq p, \quad 0 \leq a \leq  a_2, \quad \big(\frac{n}{p_2}-\frac{n}{p}\big) + 2(a_2-a) = 1.
\end{align}

For the solution spaces, we consider the following function spaces:
\begin{align*}
{\mathcal X} (T) := {\mathcal L}^q_a (0, T;  \dot B^{\al}_{p,q}  (\R)) \cap {\mathcal L}^\infty (0, T; \dot W^1_\infty (\R)) \cap {\mathcal L}^q_a (0, T;  L^p (\R)), \quad  0 < T \leq \infty.
\end{align*}

The precise definitions of these function spaces are provided in Section 2.
 
\begin{defin}[Local solution]
\label{D21}
Let $n \ge 2$ and $2 \leq p, q \leq \infty$.
Assume that $u_0$ is a ${\mathbb T} $-valued ${\mathcal F}_0$-measurable random variable, and let $g \in {\mathcal T}$.
We say that $(u, \tau)$ is a local solution to the  equations \eqref{maineq2} if $u \in {\mathcal X} (\infty)$ is a progressively measurable process and $\tau$ satisfies the stopping time condition
\[
\tau(\om) = \inf \set{0 \le T \le \infty : \norm{u(\om)}_{{\mathcal X} (T)} \ge 2R}, 
\]
where $R $ is a positive number.
Moreover, for almost surely, the function $u$ belongs to 
\[
{\mathcal L}^q_a (0, \tau|];  \dot B^{\al}_{p,q}  (\R)) \cap {\mathcal L}^\infty (0, \tau|]; \dot W^1_\infty (\R)) \cap {\mathcal L}^q_a (0, \tau|];  L^p (\R))
\]
and satisfies the weak formulation
\begin{align*}
&\int_{\R} u(t,x) \cdot \Phi(x) dx
- \int_{\R} u_0(x) \cdot \Phi(x) dx \\
&= \int^t_0 \int_{\R} \Big( \bF (\bD u) \bD u  + u \otimes u \Big) :\nabla \Phi  (x) dxdt
+ \int_0^t \int_{\R}  g  \Phi dx dB_t
\end{align*}
for all $ \Phi \in C_c^\infty(\R)$ with ${\rm div} \, \Phi =0$ and for all $t < \tau(\om)$.  
\end{defin}

Here is our main result.

\begin{theo}
\label{T1}
Let $\bF$ satisfy {\bf Assumption}.  Let $ 0 <\ep$. There is $\de_0 > 0$ such that if 
\begin{align}\label{0410-1}
u_0 & \in {\mathbb T}, \qquad 
g \in  {\mathcal T}
\end{align} 
with ${\rm div} \, u_0 =0$ and   ${\rm div} \,\, g =0$ satisfying  
\begin{align*}
& \| u_0\|_{{\mathbb T } } + \| g \|_{{\mathcal T}} 
 < \de,
\end{align*}
then the initial value problem \eqref{maineq2} has a unique local weak solution $(u,\tau)$ in the sense of Definition \ref{D21} with ${\mathbb P}(\tau =\infty) \ge 1 -\ep$.
\end{theo}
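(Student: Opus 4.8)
The plan is to recast \eqref{maineq2} as a fixed-point problem for a contraction on a small ball of the solution space $\mathcal X(\infty)$, and then to convert the resulting a priori control into the probabilistic statement by a truncation argument. First I would apply the Helmholtz--Leray projection $P$ onto divergence-free fields to eliminate the pressure $\na p$, and split the stress tensor about its value at the origin,
$$\bF(\bD u)\bD u = \bF(0)\bD u + \big(\bF(\bD u)-\bF(0)\big)\bD u.$$
After projection, the first term yields a constant-coefficient second-order elliptic operator $Lu = P\,\divg(\bF(0)\bD u)$; by the ellipticity in part (2) of {\bf Assumption}, $L$ generates an analytic semigroup $S(t)$ of Stokes type, and the system becomes the mild equation
$$u(t) = S(t)u_0 + \int_0^t S(t-s)\,P\,\divg\Big(\big(\bF(\bD u)-\bF(0)\big)\bD u - u\otimes u\Big)\,ds + \int_0^t S(t-s)\,P\,g\,dB_s.$$
I would take as granted the linear stochastic maximal-regularity estimate established earlier in the paper, which bounds the deterministic drift by the data in the spaces of \eqref{def1}--\eqref{def2} and controls the stochastic convolution $\int_0^t S(t-s)Pg\,dB_s$ in $\mathcal X(\infty)$ by $\|g\|_{\mathcal T}$; the parameter relations in \eqref{0721-2} are precisely what make these scaling-critical estimates close.

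Second, I would establish the nonlinear estimates needed for the contraction. The convection term $u\otimes u$ is the familiar quadratic nonlinearity, controlled bilinearly by a product (paraproduct) estimate together with the $\dot W^1_\infty$ and $L^p$ components of $\mathcal X$, giving a bound of the form $\|u\otimes u - v\otimes v\| \le C(\|u\|_{\mathcal X}+\|v\|_{\mathcal X})\|u-v\|_{\mathcal X}$ in the drift-data space. The genuinely non-Newtonian term requires all three bounds of part (1) of {\bf Assumption}: since $\bF(\bD u)-\bF(0)\approx D\bF(0)\bD u$, the term is effectively quadratic in $\bD u$, while after taking the divergence the worst contributions are $(\bF(\bD u)-\bF(0))\,\na^2 u$ and $D\bF(\bD u)\,\na(\bD u)\,\bD u$; the hypotheses $|\bF|,\,|D\bF|,\,|D^2(\bF)\,{\mathbb G}|\le c$ are exactly tuned to bound these in $\dot B^{\al-1}_{p,q}$, $L^{p_2}$, and the Sobolev norms of $\mathcal T$, again yielding a Lipschitz bound quadratic in $\|u\|_{\mathcal X}$. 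Combined with the linear estimate, the fixed-point map is a self-map and contraction on the ball of radius $R$ in $\mathcal X(\infty)$ once the data norm lies below some $\de_0$, producing a unique global-in-time solution of the mild equation.

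Third, I would pass to the probabilistic statement. Because the expectations in the $\mathcal L^q_a$ norms obstruct a uniform pathwise contraction, I would first solve a \emph{truncated} problem in which the nonlinearity is multiplied by a smooth cutoff $\chi_R(\|u\|_{\mathcal X(t)})$ supported in $\{\|u\|_{\mathcal X(t)}\le 2R\}$; the truncated nonlinearity is globally Lipschitz, so the contraction above gives a unique global progressively measurable solution. Defining $\tau$ as in Definition \ref{D21}, the truncated and original equations coincide on $[0,\tau)$, so $(u,\tau)$ is the desired local solution obeying the weak formulation. Finally, the a priori bound $\|u\|_{\mathcal X(\infty)}\le C(\|u_0\|_{\mathbb T}+\|g\|_{\mathcal T})$ together with Markov's inequality gives
$${\mathbb P}(\tau<\infty) = {\mathbb P}\Big(\sup_{t}\|u\|_{\mathcal X(t)}\ge 2R\Big) \le \frac{C(\|u_0\|_{\mathbb T}+\|g\|_{\mathcal T})}{2R},$$
so choosing $\de_0$ small relative to $\ep R$ forces ${\mathbb P}(\tau=\infty)\ge 1-\ep$.

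The hardest part will be the nonlinear estimate for the non-Newtonian stress $(\bF(\bD u)-\bF(0))\bD u$: unlike the clean quadratic convection term, it mixes the nonlinear multiplier $\bF$ with second derivatives of $u$, so one must simultaneously exploit all three structural bounds on $\bF$ and the $L^\infty$ control of $\na u$ to absorb the top-order derivatives into the maximal-regularity gain, while keeping every estimate consistent with the scaling dictated by \eqref{0721-2}.
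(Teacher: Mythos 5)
Your proposal is correct and follows essentially the same route as the paper: split the stress about $\bF(0)$ so the linear part is a stochastic Stokes system, invoke the linear estimates (the paper's Theorems 3.2--3.5, stated via explicit heat-kernel/Riesz-transform representations rather than your semigroup formulation), truncate the nonlinearity by a cutoff of the running norm $\|u\|_{\mathcal X(t)}$ to obtain a globally Lipschitz drift, run the Banach fixed point argument on a small ball with $\delta\sim R^2$, and finish with the stopping time plus Chebyshev's inequality. The only differences are cosmetic: the paper estimates the non-Newtonian term in divergence form via a Besov composition lemma (Lemma \ref{L23}) rather than distributing the divergence, and its final bound $\mathbb{P}(\tau<\infty)\le 5C_1R$ makes explicit the choice of both $R\lesssim\epsilon$ and $\delta=R^2$ that your sketch leaves implicit.
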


The paper is organized as follows.
Section \ref{notation} introduces the definitions of function spaces and presents several inequalities used in the proof of the main theorem.
Section \ref{Proof1} provides key estimates for the solution of the stochastic Stokes equations in $\R$. 
Section \ref{nonlinear} is dedicated to the proof of Theorem \ref{T1}.
The Appendices contain the proofs of technical lemmas for the reader’s convenience. 
While these results may be seen as variants found in different literature, we include them here for completeness.

\section{ Function spaces}
\label{notation}
\setcounter{equation}{0}
 
\subsection{Function spaces}

For $1 \le p \le \infty$, we denote the Lebesgue spaces by $L^p(\R)$ with the norm $ 
 \norm{f}_{L^p(\R)} = \left(\int_{\R} |f(x)|^p dx\right)^{1/p}$.  The Fourier transform of a Schwartz function $f \in \cS(\R)$ is given by $
\widehat{f}(\xi) = \int_{\R} e^{-2\pi i x \cdot \xi} f(x) dx$.  
The definition of the Fourier transform is extended to tempered distributions $f \in \cS'(\R)$ by duality.

Let $s\in {\mathbb R}$ and $1 < p <  \infty$. We define a
  homogeneous Sobolev space $\dot W^{s }_p ({\mathbb R}^{n})$, $ n \geq 1$ by
\begin{align}\label{0219-1}
\dot W^{s }_p  ({\mathbb R}^{n}) = \{ f \in {\mathcal S}'({\mathbb
R}^{n}) \, | \, f = I_s g :  = h_{s} * g, \quad \mbox{for some} \quad g   \in L^p ({\mathbb R}^{n}) \}
\end{align}
with norm $\|f\|_{\dot W^{s }_p  ({\mathbb R}^{n})} : = \| g \|_{L^p({\mathbb R}^{n})},$
 where
$*$ is a convolution in ${\mathbb R}^{n}$. 
Here, $h_s $  is a distribution in ${\mathbb R}^{n}$ whose Fourier transform
in ${\mathbb R}^{n}$ is defined by
\begin{eqnarray*}
\widehat  h_{s} (\xi) = c_s  |\xi|^{-s}, \quad \xi \in {\mathbb R}^n.
\end{eqnarray*}
We also define $ \dot W^1_\infty(\R)$ by 
\begin{align*}
\dot W^1_\infty(\R) : = \{ f \in {\mathcal T}' (\R) \, | \, \| Df \|_{L^\infty (\R)} < \infty \, \}.
\end{align*}

To define the Littlewood--Paley operators, we fix a function $\Psi \in \cS(\R)$ whose Fourier transform is nonnegative and satisfies
\[
\chi_\set{\frac12 \le |\xi| \le 2}(\xi)  \le \widehat \Psi(\xi) \le \chi_\set{\frac14  \le |\xi| \le 4 }(\xi) 
\]
so that for $\xi \neq 0$,
\[
\sum_{j \in \Z} \widehat\Psi(2^{-j}\xi) = 1.
\]
For $j \in \Z$, we define the associated Littlewood-Paley operator by
\[
\Delta_jf(x)
= \Psi_{2^{-j}} * f(x) = \intRn e^{2\pi ix\cdot\xi} \widehat \Psi(2^{-j}\xi) \widehat f(\xi)\;d\xi.
\]

We define the homogeneous Besov space to be the set ${\dot B^s_{p,q}}(\R)$ of all $f \in \cS'(\R)$ with the finite norm
\begin{align*}
\norm{f}_{\dot B^s_{p,q}  (\R) }
&= \big(\sum_{j \in \Z} 2^{jsq} \norm{\De_j f}_p^q\big)^\frac1q,\quad
 1 \leq q < \infty,\\
 \norm{f}_{\dot B^s_{p,\infty}  (\R)  }
&=  \sup_{j \in \Z} 2^{jsq} \norm{\De_j f}_p.
\end{align*}

\ Let $ 1 < p, \, q < \infty$ and $ 0 < s < 1$. The $\dot B_{p,q}^{s } ({\mathbb R}^{n})$-norm of $f$ is equivalent to 
\begin{align}\label{0307-1}
\| f\|_{\dot B^{s }_{p,q} (\R) } & =   \Big(   \int_{{\mathbb R}^{n}}  
\frac1{|y|^{n  + qs  } } \big( \int_{{\mathbb R}^{n}}  |f(x +y ) - f (x )|^p  dx  \big)^{\frac{q}p}  dy   \Big)^\frac1q.
\end{align}
 
See Theorem 3.6.1 in \cite{amman-anisotropic} and Section 6.8 in \cite{BL}.

For $  0 < s$,   the inhomogegneous Besov space $ B^s_{p,q}(\R)$  is defined by $ B^s_{p,q}(\R) = L^p (\R) \cap \dot B^s_{p,q}(\R)$.

\begin{rem}\label{rem0712}
\begin{itemize}
\item[(1)]
We consider  $\dot W^{s }_p (\R)$ and $\dot B^{s }_{p,q} (\R)$ as the quotient spaces with polynomial space   and so $\dot W^s_p (\R)$ and $\dot B^{s }_{p,q} (\R)$ are normed space.

\item[(2)]
For $ s \in {\mathbb R}$, $1 < p < \infty$, the   homogeneous Sobolev space  $\dot W^{s }_p (\R)$ norm is equivalent to 
\begin{align}\label{0712-5}
  \| {\mathcal F}^{-1} \big( \sum_{-\infty< k < \infty}   |\xi|^s  \hat \Psi(2^{-k} \xi ) \hat{f}\big)\|_{L^p (\R)}.
\end{align}
(See Appendix A in \cite{CJ-jmp})

\item[(3)]
From (2),  we obtain $ \dot W^k_p (\R) = \{ f \in {\mathcal S}'(\R) \, | \, D^\ga f \in  L^p (\R) \quad \mbox{for} \quad |\ga| =k \}$ for $ k \in {\mathbb N} \cup \{ 0 \}$, $ 1 < p < \infty$.

\end{itemize}
\end{rem}

\begin{prop}\label{prop0215} 
\begin{itemize}
\item[(1)]
For  $1 \leq  p, \,\, q_0,\,\, q_1,\,\,  r\leq \infty$ and $ s_1, \, s_2 \in {\mathbb R}$,
\begin{align*}
( \dot W^{s_0 }_p (\R),   \dot W^{s_1 }_p (\R) )_{\te, r} = \dot B^{s }_{p,r} (\R), \qquad
( \dot B^{s_0 }_{p, q_0} (\R),   \dot B^{s_1 }_{p,q_1} (\R) )_{\te, r} = \dot B^{s }_{p,r} (\R),
\end{align*}
where $0 < \te<1$ and  $s = s_0  (1 -\te) + s_1 \te$. For $0< \theta<1$ and $1\leq p\leq \infty$,  we denote by   $(X,Y)_{\theta,p}$  the real interpolation of the normed spaces $X$ and $Y$.

\item[(2)]
Suppose that $ 1 \leq p_0\leq p_1  \leq \infty, \,  1 \leq r_0\leq r_1  \leq \infty$ and $ s_0\geq s_1$ with $s_0 - \frac{n}{p_0} = s_1 - \frac{n}{p_1}$.
Accordingly, the following inclusions hold
\begin{align*}
\dot  W^{s_0 }_{p_0}  (\R) \subset   \dot W^{s_1 }_{p_1} (\R), \quad
\dot  B^{ s_0 }_{p_0, r_0} (\R) \subset   \dot B^{s_1 }_{p_1 ,r_1} (\R).
\end{align*}
 \end{itemize}
\end{prop}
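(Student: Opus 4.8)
The plan is to reduce every assertion to the Littlewood--Paley decomposition together with two classical facts: the computation of the $K$-functional for weighted sequence spaces, and the Hardy--Littlewood--Sobolev inequality for the Riesz potential $I_s$ of \eqref{0219-1}. Throughout write $\ell^q_s(L^p)$ for the space of sequences $(f_j)_{j \in \Z}$ with $\big(\sum_{j} 2^{jsq}\norm{f_j}_p^q\big)^{1/q} < \infty$ (supremum when $q=\infty$). For the Besov identity in (1), I would first set up the standard retraction/coretraction pair $Jf = (\De_j f)_{j\in\Z}$ and $P(g_j) = \sum_j \ti\De_j g_j$, where $\ti\De_j$ is a fattened projector with $\ti\De_j \De_j = \De_j$. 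Then $PJ = \mathrm{id}$ on $\dot B^s_{p,q}(\R)$, the map $J$ sends $\dot B^s_{p,q}(\R)$ boundedly into $\ell^q_s(L^p)$, and $P$ maps $\ell^q_s(L^p)$ boundedly onto $\dot B^s_{p,q}(\R)$, so $\dot B^s_{p,q}(\R)$ is a retract of $\ell^q_s(L^p)$. Since the real interpolation functor commutes with retracts, the Besov identity reduces to $(\ell^{q_0}_{s_0}(L^p), \ell^{q_1}_{s_1}(L^p))_{\te,r} = \ell^r_s(L^p)$, which is the classical weighted-$\ell^q$ $K$-functional computation (the inner $L^p$ is passive and the weight gap $s_1-s_0$ drives the interpolation). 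The point worth emphasizing is that the output fine index is $r$ and is entirely independent of $q_0,q_1$.

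For the Sobolev identity in (1) I would not interpolate directly but sandwich. Recall the elementary embeddings $\dot B^s_{p,p\wedge 2}(\R) \hookrightarrow \dot W^s_p(\R) \hookrightarrow \dot B^s_{p,p\vee 2}(\R)$ for $1 < p < \infty$, which follow from the square-function characterization of $\dot W^s_p(\R)$ in Remark \ref{rem0712}(2) together with Minkowski's inequality in the sequence variable. Applying the monotonicity of real interpolation (inclusions are preserved by $(\cdot,\cdot)_{\te,r}$) to both ends and inserting the already-proved Besov identity yields $\dot B^s_{p,r}(\R) \hookrightarrow (\dot W^{s_0}_p(\R), \dot W^{s_1}_p(\R))_{\te,r} \hookrightarrow \dot B^s_{p,r}(\R)$, so the middle space is squeezed to $\dot B^s_{p,r}(\R)$. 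Here it is crucial that the Besov interpolation output does not see the secondary indices $p\wedge 2$, $p\vee 2$.

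For the inclusions in (2), the Besov embedding is frequency-localized Bernstein. Since $\De_j f$ has Fourier support in $\set{|\xi| \sim 2^j}$, Bernstein's inequality gives $\norm{\De_j f}_{p_1} \lesssim 2^{jn(1/p_0 - 1/p_1)} \norm{\De_j f}_{p_0}$; multiplying by $2^{js_1}$ and using the dimensional balance in the form $s_1 + n/p_0 - n/p_1 = s_0$ turns this into $2^{js_1}\norm{\De_j f}_{p_1} \lesssim 2^{js_0}\norm{\De_j f}_{p_0}$, after which the drop $r_0 \le r_1$ follows from $\ell^{r_0} \hookrightarrow \ell^{r_1}$. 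The Sobolev embedding is cleanest through the potential representation $\dot W^s_p(\R) = I_s(L^p(\R))$: writing $f = I_{s_0} g$ with $g \in L^{p_0}(\R)$, the claim $f \in \dot W^{s_1}_{p_1}(\R)$ is exactly $I_{s_0-s_1} g \in L^{p_1}(\R)$, which is the Hardy--Littlewood--Sobolev inequality under $1/p_1 = 1/p_0 - (s_0-s_1)/n$, i.e.\ precisely the stated identity $s_0 - n/p_0 = s_1 - n/p_1$.

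The routine parts are Bernstein, Hardy--Littlewood--Sobolev, and the sequence-space $K$-functional computation, all of which are standard. The only genuine care is at the endpoints ($p \in \set{1,\infty}$ or $r_0, r_1 = \infty$): Hardy--Littlewood--Sobolev fails at $p_0 = 1$ and $p_1 = \infty$, and the square-function embeddings used to squeeze the Sobolev interpolation require $1 < p < \infty$. I expect the main obstacle to be precisely this endpoint bookkeeping---either restricting to the reflexive range $1 < p < \infty$ (which is all that Theorem \ref{T1} actually uses) or, at the excluded endpoints, replacing the potential/square-function steps by the direct Bernstein argument of (2) and by the Besov identity already established in (1).
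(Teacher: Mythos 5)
Your proposal is correct, but note that the paper does not prove this proposition at all: it simply cites Section 6.4 of Bergh--L\"ofstr\"om \cite{BL}. What you have written is, in essence, a reconstruction of the argument in that reference: the retract of $\dot B^s_{p,q}$ onto the weighted sequence space $\ell^q_s(L^p)$ plus the classical $K$-functional computation for weighted $\ell^q$-spaces (whose output index $r$ is independent of $q_0,q_1$), the sandwich $\dot B^s_{p,p\wedge 2} \hookrightarrow \dot W^s_p \hookrightarrow \dot B^s_{p,p\vee 2}$ to squeeze the Sobolev interpolation couple, and Bernstein respectively Hardy--Littlewood--Sobolev for the two embeddings in (2). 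Two small points of bookkeeping: the interpolation identities require $s_0 \neq s_1$ (your remark that the ``weight gap drives the interpolation'' is exactly where this enters, but it should be stated, especially since the proposition's hypotheses omit it); and your endpoint caveat is well taken, since the paper's own definition of $\dot W^s_p$ is only given for $1<p<\infty$ even though the proposition is stated for $1\le p\le\infty$, and the range $1<p<\infty$ is indeed all that the rest of the paper uses. So your proof is sound where the spaces are defined, and it supplies a self-contained argument where the paper supplies only a citation.
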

 See Section 6.4 in \cite{BL}.

For the normed space $X$ and interval $(0, T), \, 0< T \leq \infty$, we denote by $L^p(0, T;X), 1\leq p\leq \infty$  the usual Bochner space.
We denote by $L^q_s(0,T;X)$ the weighted Bochner space with the norm
\[
\|f\|_{L^q_s(0,T;X)}=\Big(\int^T_0t^{sq}\|f(t)\|_{X}^q dt\Big)^{\frac{1}{q}}.
\]

\subsection{Useful inequalities}
 
\begin{lemm}
[{\cite[Lemma 2.2]{chae}}]
\label{L21}
Let $s > 0$ and $1 \leq p, \,  q \leq \infty$.
If $1 \leq p \leq r_i$,  $  p_i \leq \infty$ satisfies $\frac1p = \frac1{r_i} + \frac1{p_i}$ for $i=1,2$, then
\[
\norm{fg}_{ \dot B^{s }_{p,q} } 
\leq c \big(
\norm{f}_{ L^{r_1} } 
\norm{g}_{ \dot B^{s }_{p_1,q } } 
+ 
\norm{g}_{ L^{r_2 } } 
\norm{f}_{ \dot B^{s }_{p_2 ,q} } \big).
\]
\end{lemm}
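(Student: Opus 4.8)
The plan is to establish this product (Moser-type) estimate via Bony's paraproduct decomposition together with the Littlewood--Paley characterization of the Besov norm. Writing $S_{j-1} = \sum_{k \le j-2}\Delta_k$, I decompose
\[
fg = T_f g + T_g f + R(f,g),
\]
where $T_f g = \sum_j S_{j-1}f\,\Delta_j g$, $T_g f = \sum_j S_{j-1}g\,\Delta_j f$, and $R(f,g) = \sum_{|j-k|\le 1}\Delta_j f\,\Delta_k g$. The two paraproducts will generate the two terms on the right-hand side after an application of H\"older's inequality, while the diagonal remainder $R(f,g)$ is where the hypothesis $s>0$ enters decisively. First I would record the standard spectral localization facts: the Fourier support of each summand $S_{j-1}f\,\Delta_j g$ lies in an annulus of size $\sim 2^j$, so that $\Delta_m(T_f g)$ receives contributions only from indices $j$ with $|j-m|\le N_0$ for a fixed $N_0$; whereas the support of $\Delta_j f\,\Delta_k g$ with $|j-k|\le 1$ lies in a ball of radius $\sim 2^j$, so $\Delta_m R(f,g)$ receives contributions only from $j \ge m - N_1$.

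For the paraproduct $T_f g$, applying $\Delta_m$ and then H\"older's inequality with $\frac1p = \frac1{r_1}+\frac1{p_1}$, together with the uniform bound $\|S_{j-1}f\|_{L^{r_1}} \lesssim \|f\|_{L^{r_1}}$, gives
\[
\|\Delta_m(T_f g)\|_{L^p} \lesssim \sum_{|j-m|\le N_0}\|f\|_{L^{r_1}}\,\|\Delta_j g\|_{L^{p_1}}.
\]
Multiplying by $2^{ms}$ and taking the $\ell^q$-norm over $m$, the finite band $|j-m|\le N_0$ and the comparability $2^{ms}\sim 2^{js}$ there yield $\|T_f g\|_{\dot B^s_{p,q}} \lesssim \|f\|_{L^{r_1}}\|g\|_{\dot B^s_{p_1,q}}$; the symmetric argument with the exponent pair $(r_2,p_2)$ gives $\|T_g f\|_{\dot B^s_{p,q}} \lesssim \|g\|_{L^{r_2}}\|f\|_{\dot B^s_{p_2,q}}$.

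The main obstacle is the remainder $R(f,g)$, since here $\Delta_m$ must sum over the unbounded range $j \ge m - N_1$, and positivity of $s$ is exactly what is needed for convergence. Using H\"older with $\frac1p = \frac1{r_2}+\frac1{p_2}$ and the uniform bound $\|\widetilde\Delta_j g\|_{L^{r_2}} \lesssim \|g\|_{L^{r_2}}$, where $\widetilde\Delta_j = \Delta_{j-1}+\Delta_j+\Delta_{j+1}$, I would estimate
\[
2^{ms}\|\Delta_m R(f,g)\|_{L^p} \lesssim \|g\|_{L^{r_2}}\sum_{j\ge m-N_1} 2^{(m-j)s}\,\big(2^{js}\|\Delta_j f\|_{L^{p_2}}\big).
\]
Because $s>0$, the weights $2^{(m-j)s}$ restricted to $j \ge m - N_1$ form a summable sequence in the variable $m-j$, so Young's convolution inequality for sequences bounds the $\ell^q_m$-norm of the left-hand side by $\|g\|_{L^{r_2}}\,\big\|(2^{js}\|\Delta_j f\|_{L^{p_2}})_j\big\|_{\ell^q} = \|g\|_{L^{r_2}}\|f\|_{\dot B^s_{p_2,q}}$. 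Summing the three contributions then gives the claimed inequality. The same scheme covers the endpoint cases $q=\infty$ and $p_i, r_i \in \{1,\infty\}$, with the $\ell^q$ summation steps replaced by suprema and the uniform multiplier bounds for $\Delta_k$ justified directly by Young's inequality on $\R$.
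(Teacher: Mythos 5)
The paper offers no proof of this lemma at all: it is imported verbatim from \cite[Lemma 2.2]{chae}, so there is no internal argument to compare yours against. Your paraproduct proof is correct and is the standard route (essentially the argument behind the cited result): Bony decomposition, H\"older together with the uniform $L^r$-bounds for $S_{j-1}$ and $\Delta_j$, finite-band summation for the two paraproducts, and Young's convolution inequality in $\ell^q$ for the remainder, where $s>0$ is exactly what makes the weights $2^{(m-j)s}$, $m-j\le N_1$, summable. One technical caveat is worth fixing: with the paper's choice of $\widehat\Psi$ supported in $\{\tfrac14\le|\xi|\le4\}$, the gap $k\le j-2$ in $S_{j-1}$ is too narrow for the summands $S_{j-1}f\,\Delta_j g$ to be annulus-localized --- their spectra are only contained in balls of radius $\sim 2^j$, since $2^{j-2}-2^{j}<0$. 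Either enlarge the gap (take $S_{j-N}$ with $N\ge 5$, which restores the annulus and hence the claimed band $|j-m|\le N_0$), or simply note that ball localization suffices here: it yields a one-sided sum over $j\ge m-N_0$ for the paraproduct terms as well, which is then controlled by the same Young-inequality argument you use for $R(f,g)$, again thanks to $s>0$. With that adjustment the proof is complete, including the endpoint cases $q=\infty$ and $p_i,r_i\in\{1,\infty\}$.
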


\begin{lemm}
\label{L22}
Let $1 \leq p, \,q, \, r \leq \infty$.
For $ \ga_1< \ga_0 < \ga_2$,
\begin{align*}
\| f\|_{\dot W^{\ga_0}_p (\R) } 
&\leq c  \| f\|^{\frac{\ga_2 -\ga_0}{\ga_2 -\ga_1}}_{\dot W^{\ga_1}_{p} (\R) } \| f\|^{\frac{\ga_0 -\ga_1}{\ga_2 -\ga_1}} _{\dot B^{\ga_2}_{p,q} (\R) }, \qquad 
\| f\|_{\dot B^{\ga_0}_{p,q} (\R) } 
 \leq c \| f\|^{\frac{\ga_2 -\ga_0}{\ga_2 -\ga_1}}_{\dot W^{\ga_1}_{p} (\R) } \| f\|^{\frac{\ga_0 -\ga_1}{\ga_2 -\ga_1}} _{\dot B^{\ga_2}_{p,r} (\R) }.
\end{align*}
\end{lemm}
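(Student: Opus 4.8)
The plan is to reduce both inequalities to a single elementary convexity estimate for Besov norms, proved directly from the Littlewood--Paley decomposition, and then to pass between the Sobolev and Besov scales using soft embeddings. Throughout I write $\te = \frac{\ga_2-\ga_0}{\ga_2-\ga_1}$, so that $\ga_0 = \te\,\ga_1 + (1-\te)\ga_2$ with $1-\te = \frac{\ga_0-\ga_1}{\ga_2-\ga_1}$, and I set $a_j = \norm{\De_j f}_p$. I work in the range $1<p<\infty$, where $\dot W^{\ga_0}_p(\R)$ and $\dot W^{\ga_1}_p(\R)$ are defined, and I note that the degenerate cases (either right-hand norm infinite, or $f=0$ modulo polynomials) are trivial.

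The core step is the claim that for every $1\le q\le\infty$,
\begin{align*}
\norm{f}_{\dot B^{\ga_0}_{p,q}(\R)} \le c\,\norm{f}_{\dot B^{\ga_1}_{p,\infty}(\R)}^{\te}\,\norm{f}_{\dot B^{\ga_2}_{p,\infty}(\R)}^{1-\te}.
\end{align*}
To prove this, put $A = \norm{f}_{\dot B^{\ga_1}_{p,\infty}(\R)} = \sup_j 2^{j\ga_1}a_j$ and $B = \norm{f}_{\dot B^{\ga_2}_{p,\infty}(\R)} = \sup_j 2^{j\ga_2}a_j$, so that $a_j \le \min(A2^{-j\ga_1}, B2^{-j\ga_2})$. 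Let $j_0$ be the crossover index fixed by $2^{j_0(\ga_2-\ga_1)} = B/A$; for $j<j_0$ one uses the bound $a_j\le A2^{-j\ga_1}$ and for $j\ge j_0$ the bound $a_j\le B2^{-j\ga_2}$. Splitting $\sum_j 2^{j\ga_0 q}a_j^q$ at $j_0$ produces two geometric series with ratios $2^{(\ga_0-\ga_1)q}<1$ (from below) and $2^{(\ga_0-\ga_2)q}<1$ (from above); here the strict inequalities $\ga_1<\ga_0<\ga_2$ are essential, since they are precisely what force both series to converge and thereby supply the $\ell^q$-summability for free, regardless of $q$. Summing and substituting $2^{j_0}=(B/A)^{1/(\ga_2-\ga_1)}$ makes the $j_0$-dependence cancel and leaves $\sum_j 2^{j\ga_0 q}a_j^q \le c\,A^{q\te}B^{q(1-\te)}$, which is the claim (the case $q=\infty$ reduces to the simpler pointwise bound $\sup_j 2^{j\ga_0}a_j\le A^{\te}B^{1-\te}$).

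It then remains to connect the two scales through the sandwich $\dot B^s_{p,1}(\R)\hookrightarrow\dot W^s_p(\R)\hookrightarrow\dot B^s_{p,\infty}(\R)$ together with the trivial inclusion $\dot B^s_{p,q}(\R)\hookrightarrow\dot B^s_{p,\infty}(\R)$. The right-hand embedding follows from the Bernstein-type estimate $\norm{\De_j f}_p\le c\,2^{-js}\norm{f}_{\dot W^s_p(\R)}$, obtained by writing $\De_j f$ as the convolution of $g\in L^p$ (with $f=I_s g$) against a kernel whose $L^1$-norm is $O(2^{-js})$ by scaling. The left-hand embedding follows from the characterization \eqref{0712-5} of Remark \ref{rem0712}(2): denoting $T_k f = \mathcal{F}^{-1}(|\xi|^s\hat\Psi(2^{-k}\xi)\hat f)$, one has $\norm{T_k f}_p\le c\,2^{ks}\norm{\De_k f}_p$ since on $|\xi|\approx 2^k$ the multiplier $|\xi|^s$ acts like $2^{ks}$ times a convolution with a uniformly $L^1$-bounded kernel, whence $\norm{f}_{\dot W^s_p(\R)}\le\sum_k\norm{T_k f}_p\le c\sum_k 2^{ks}\norm{\De_k f}_p = c\norm{f}_{\dot B^s_{p,1}(\R)}$ by the triangle inequality. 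Combining gives both inequalities: for the second, apply the core estimate and bound $\norm{f}_{\dot B^{\ga_1}_{p,\infty}}\le c\norm{f}_{\dot W^{\ga_1}_p}$ and $\norm{f}_{\dot B^{\ga_2}_{p,\infty}}\le c\norm{f}_{\dot B^{\ga_2}_{p,r}}$; for the first, first dominate $\norm{f}_{\dot W^{\ga_0}_p}\le c\norm{f}_{\dot B^{\ga_0}_{p,1}}$, then apply the core estimate with $q=1$, and finally use the same two embeddings with $\dot B^{\ga_2}_{p,q}$ replacing $\dot B^{\ga_2}_{p,r}$.

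The main obstacle is the first inequality, since its left-hand side lives in a Sobolev rather than a Besov space. The genuinely useful feature to exploit is that, whatever the fine index $q$ on the left, only the weakest ($\ell^\infty$) Besov information is required on the right — and this is delivered exactly by the crossover splitting above, routed through the sandwich $\dot B^s_{p,1}\hookrightarrow\dot W^s_p\hookrightarrow\dot B^s_{p,\infty}$. A complex-interpolation shortcut would instead leave $\dot W^{\ga_2}_p$ on the right and would only recover the $\dot B^{\ga_2}_{p,q}$ norm for restricted $q$, so the Littlewood--Paley route is the one that yields the statement for all indices.
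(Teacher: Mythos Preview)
Your proof is correct and follows essentially the same route as the paper's: both pass through the sandwich $\dot B^s_{p,1}\hookrightarrow\dot W^s_p\hookrightarrow\dot B^s_{p,\infty}$ and the multiplicative inequality $\|f\|_{\dot B^{\ga_0}_{p,1}}\le c\,\|f\|_{\dot B^{\ga_1}_{p,\infty}}^{\te}\|f\|_{\dot B^{\ga_2}_{p,\infty}}^{1-\te}$, then replace the $\dot B^{\ga_1}_{p,\infty}$ and $\dot B^{\ga_2}_{p,\infty}$ norms by $\dot W^{\ga_1}_p$ and $\dot B^{\ga_2}_{p,q}$ (or $\dot B^{\ga_2}_{p,r}$) via the obvious embeddings. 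The one difference is that the paper obtains the multiplicative inequality as an immediate consequence of the real interpolation identity $(\dot B^{\ga_1}_{p,\infty},\dot B^{\ga_2}_{p,\infty})_{(\ga_0-\ga_1)/(\ga_2-\ga_1),1}=\dot B^{\ga_0}_{p,1}$ from Proposition~\ref{prop0215}(1), whereas you reprove it by hand via the Littlewood--Paley crossover splitting; this makes your argument more self-contained but is otherwise the same idea.
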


\begin{proof}
Note that $ \| f \|_{\dot B^{\ga}_{p,\infty} (\R) } \leq c \| f \|_{\dot W^{\ga}_p (\R)} \leq c \| f \|_{\dot B^{\ga}_{p,1} (\R) } $ and $ \| f \|_{\dot B^{\ga}_{p,\infty} (\R) } \leq c \| f \|_{\dot B^{\ga}_{p,q} (\R)} \leq c \| f \|_{\dot B^{\ga}_{p,1} (\R) } $  for $ 1 \leq p, \, q \leq \infty$ and $\ga \in {\mathbb R}$. 
Since $(\dot B^{\ga_1}_{p, \infty} (\R) , \dot B^{\ga_2}_{p,\infty} (\R)   )_{\frac{\ga_0 -\ga_1}{\ga_2 -\ga_1},1  }  =   \dot B^{\ga_0}_{p,1} (\R)$, we get
\[
\| f \|_{\dot W^{\ga_0}_p (\R)} 
\leq c \| f \|_{\dot B^{\ga_0}_{p,1} (\R) } 
\leq c \| f\|^{\frac{\ga_2 -\ga_0}{\ga_2 -\ga_1}}_{\dot B^{\ga_1}_{p, \infty} (\R) } \| f\|^{\frac{\ga_0 -\ga_1}{\ga_2 -\ga_1}} _{\dot B^{\ga_2}_{p,\infty} (\R) }
\leq c \| f\|^{\frac{\ga_2 -\ga_0}{\ga_2 -\ga_1}}_{\dot W^{\ga_1}_{p} (\R) } \| f\|^{\frac{\ga_0 -\ga_1}{\ga_2 -\ga_1}} _{\dot B^{\ga_2}_{p,r} (\R) }.
\]
This is the proof of the first inequality. The proof of the second inequality is similar.
\end{proof}

\begin{lemm}
\label{L23}
Let $0 < s<1$ and $ 1 \leq p, \, q < \infty$. Let $ \rho^*(r) = \sup_{|{\mathbb G}| \leq r} |\rho( {\mathbb G})|$.
If  $ {\mathbb G}, \, {\mathbb H} \in \dot B^s_{p,q}(\R) \cap L^\infty(\R  )$ with $ \| {\mathbb G}\|_{L^\infty(\R  )}, \,\, \| {\mathbb H}\|_{L^\infty(\R  )} \leq \frac12 r_0$ for some $ 0 < r_0$,  then
\begin{align}\label{0114-1}
\begin{split}
\|\rho ({\mathbb G} )\|_{\dot B^{ s}_{p,q} (\R) }  
&\leq c  (D\rho)^* ( r_0)    \|{\mathbb G} \|_{\dot B^s_{p,q} (\R)},\\
 \| \rho ({\mathbb G}) -\rho ({\mathbb H})\|_{\dot B^{s }_{p,q}(\R) }  &  \leq c \Big(  (D\rho)^* ( r_0)   \| {\mathbb G} - {\mathbb H}\|_{\dot B^{s}_{p,q} (\R)} \\
 & \qquad + (D^2\rho)^* (r_0)  \big( \|{\mathbb G}\|_{\dot B^{s}_{p,q} (\R)}
   +     \|{\mathbb H}\|_{\dot B^{s}_{p,q} (\R)}    \big)   \| {\mathbb G}-{\mathbb H}\|_{L^\infty (\R)}   \Big).
 \end{split}
\end{align}
\end{lemm}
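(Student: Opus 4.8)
The plan is to work directly with the integral characterization \eqref{0307-1} of the homogeneous Besov norm, reducing both estimates to pointwise bounds on the finite differences of the composed functions. For the first inequality I would write, using the fundamental theorem of calculus,
\[
\rho({\mathbb G}(x+y)) - \rho({\mathbb G}(x)) = \Big( \int_0^1 D\rho\big( {\mathbb G}(x) + t( {\mathbb G}(x+y) - {\mathbb G}(x)) \big)\, dt \Big) \cdot \big( {\mathbb G}(x+y) - {\mathbb G}(x) \big).
\]
Because $\|{\mathbb G}\|_{L^\infty(\R)} \le \tfrac12 r_0$, every argument appearing here is a convex combination of values of ${\mathbb G}$ and hence has modulus at most $\tfrac12 r_0 \le r_0$; thus the integral factor is bounded by $(D\rho)^*(r_0)$. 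Taking absolute values gives $|\rho({\mathbb G}(x+y)) - \rho({\mathbb G}(x))| \le (D\rho)^*(r_0)\,|{\mathbb G}(x+y) - {\mathbb G}(x)|$, and inserting this into \eqref{0307-1} immediately yields the first inequality.

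For the second inequality I would first factor the difference as a product. Setting
\[
{\mathbb K}(x) := \int_0^1 D\rho\big( (1-t){\mathbb H}(x) + t {\mathbb G}(x)\big)\, dt,
\]
the fundamental theorem of calculus gives $\rho({\mathbb G}) - \rho({\mathbb H}) = {\mathbb K}\,({\mathbb G} - {\mathbb H})$. Applying the product estimate of Lemma \ref{L21} with $r_1 = r_2 = \infty$ (so that $p_1 = p_2 = p$) yields
\[
\| \rho({\mathbb G}) - \rho({\mathbb H})\|_{\dot B^s_{p,q}(\R)} \le c\big( \| {\mathbb K}\|_{L^\infty(\R)}\, \|{\mathbb G} - {\mathbb H}\|_{\dot B^s_{p,q}(\R)} + \|{\mathbb G} - {\mathbb H}\|_{L^\infty(\R)}\, \|{\mathbb K}\|_{\dot B^s_{p,q}(\R)} \big).
\]
As before, $\|{\mathbb K}\|_{L^\infty(\R)} \le (D\rho)^*(r_0)$, which produces the first term on the right-hand side of the claimed bound, so it remains only to control $\|{\mathbb K}\|_{\dot B^s_{p,q}(\R)}$.

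The main obstacle is precisely this last bound, which is where the hypothesis on $D^2\rho$ enters. I would again use \eqref{0307-1} and estimate the finite difference ${\mathbb K}(x+y) - {\mathbb K}(x)$. For fixed $t$, writing $A = (1-t){\mathbb H}(x+y) + t{\mathbb G}(x+y)$ and $B = (1-t){\mathbb H}(x) + t{\mathbb G}(x)$, a second application of the fundamental theorem of calculus gives $D\rho(A) - D\rho(B) = \big( \int_0^1 D^2\rho(B + \sigma(A-B))\, d\sigma\big)(A-B)$, where every argument is again a convex combination of values of ${\mathbb G}$ and ${\mathbb H}$ and hence lies in the ball of radius $r_0$. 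Since $A - B = (1-t)({\mathbb H}(x+y) - {\mathbb H}(x)) + t({\mathbb G}(x+y) - {\mathbb G}(x))$, integrating in $t$ yields the pointwise bound
\[
|{\mathbb K}(x+y) - {\mathbb K}(x)| \le \tfrac12 (D^2\rho)^*(r_0)\big( |{\mathbb G}(x+y) - {\mathbb G}(x)| + |{\mathbb H}(x+y) - {\mathbb H}(x)|\big).
\]
Feeding this into \eqref{0307-1} and using the triangle inequality in the $L^p_x$ and $\ell^q_y$ norms gives $\|{\mathbb K}\|_{\dot B^s_{p,q}(\R)} \le c\,(D^2\rho)^*(r_0)\big(\|{\mathbb G}\|_{\dot B^s_{p,q}(\R)} + \|{\mathbb H}\|_{\dot B^s_{p,q}(\R)}\big)$, which, combined with the product estimate above, completes the proof. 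The only points requiring care are verifying that all intermediate arguments remain inside the ball $\{|{\mathbb G}| \le r_0\}$, guaranteed by the $L^\infty$ hypotheses together with convexity, and checking that the restriction $s \in (0,1)$ legitimizes the use of the equivalent norm \eqref{0307-1}.
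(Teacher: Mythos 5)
Your proof is correct, and while the first inequality is handled exactly as in the paper (fundamental theorem of calculus plus the difference characterization \eqref{0307-1}), your treatment of the second inequality takes a genuinely different route. The paper estimates the second-order difference $\rho({\mathbb G}(x+y))-\rho({\mathbb H}(x+y))-\rho({\mathbb G}(x))+\rho({\mathbb H}(x))$ directly: it applies the fundamental theorem of calculus at both points, regroups the result into two terms (one carrying the average of $D\rho$ against the second difference of ${\mathbb G}-{\mathbb H}$, the other carrying the difference of the $D\rho$-averages, controlled via $D^2\rho$, against $({\mathbb G}-{\mathbb H})(x+y)$), and then inserts the resulting pointwise bound into \eqref{0307-1} — in effect a discrete Leibniz rule carried out by hand inside the integral characterization. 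You instead factor $\rho({\mathbb G})-\rho({\mathbb H})={\mathbb K}\,({\mathbb G}-{\mathbb H})$ globally and invoke the product estimate of Lemma \ref{L21} with $r_1=r_2=\infty$ (a legitimate endpoint choice there, applied componentwise to the tensor contraction), which reduces everything to the two bounds $\|{\mathbb K}\|_{L^\infty}\le (D\rho)^*(r_0)$ and $\|{\mathbb K}\|_{\dot B^s_{p,q}}\le c\,(D^2\rho)^*(r_0)(\|{\mathbb G}\|_{\dot B^s_{p,q}}+\|{\mathbb H}\|_{\dot B^s_{p,q}})$; your verification of the latter, including the convexity argument keeping all arguments inside the ball of radius $r_0$, is sound. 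Your version is more modular and reuses a lemma the paper already states, cleanly isolating where $D^2\rho$ enters; the paper's version is self-contained at the level of \eqref{0307-1} and does not rely on the Littlewood--Paley machinery underlying Lemma \ref{L21}. Both arguments rest on the same two applications of the fundamental theorem of calculus, and both inherit the same minor caveat that \eqref{0307-1} is stated in the paper for $1<p,q<\infty$ while the lemma permits $p,q\ge 1$.
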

We prove Lemma \ref{L23} in Appendix \ref{prooflemmaL23}.

\subsection{Stochastic function spaces}

Let $(\Om, {\mathcal G}, {\mathbb P})$ be a probability space.
Let $\{{\mathcal G}_t : t\geq 0\}$ be a filtration of $\si$-fields ${\mathcal G}_t \subset {\mathcal G}$ with $\mathcal{G}_0$ containing all ${\mathbb P}$-null subsets of $\Om$.
We denote by $B(t)$ a one-dimensional $\{\mathcal G_t \}$-adapted Wiener process defined on $(\Om, {\mathcal G}, {\mathbb P})$.
The expectation of a random variable $f$ is denoted by ${\mathbb E} f= \int_\Omega f(\omega) dP(\omega)$.
Following the standard convention, we omit the argument $\omega$ of random variables $f(\omega)$.

The solution $u$ and data $(u_0,g)$ in \eqref{0410-1} are random variables.
We construct suitable spaces for them using the Besov spaces.
We will use two different kinds of spaces.
The first type emphasizes the regularity in $x$, whereas the second type does the joint regularity in $(t,x)$.
We can consider $u$ and $g$ as Banach space-valued stochastic processes.
Hence, $(\Omega\times(0,\infty),\mathcal{P}, {\mathbb P} \bigotimes\ell(0,\infty))$ is a suitable choice for their common domain, where $ {\mathcal P}$ is the predictable $\si$-field generated by $\{{\mathcal G}_t : t \geq 0\}$ (see, for instance, pp. 84--85 of \cite{MR1661766}) and $\ell(0,\infty)$ is the Lebesgue measure on $(0,\infty)$.

We define ${\mathbb L}^p (\R)$, $\dot {\mathbb W}^k_p (\R)$ and and $\dot {\mathbb B}_{p,q}^s (\R), \,\, s \in {\mathbb R}$ as sets of $L^p (\R)$-valued ${\mathcal F}_0$-measurable random variables and $ \dot B_{p,q}^s (\R)$-valued ${\mathcal F}_0$-measurable random variables with the norms
\begin{align*}
\|f\|_{{\mathbb L}^p (\R)} 
= \big( {\mathbb E} \|f\|^p_{L^p (\R)} \big)^\frac1p, \quad  \|f\|_{\dot {\mathbb W}^k_p (\R)} 
&= \big( {\mathbb E} \|f\|^p_{\dot W^k_p (\R)} \big)^\frac1p, \quad 
\|f\|_{{\mathbb B}_{p,q}^s (\R)} 
= \big( {\mathbb E} \|f\|^q _{B_{p,q}^s (\R)} \big)^\frac1q.
\end{align*}
For stopping time $\tau$, we denote $ (0, \tau|] : =\{ ( \om,t) \in \Om \times (0, \infty) \,\, | \,\, 0 < t \leq \tau (\om) \, \}$. For a Banach space $X$ and $\al \in {\mathbb R}$ we define the stochastic Banach spaces $ {\mathcal L}^r_\al( (0, \tau|], {\mathcal P}; X)$ and ${\mathcal M}^{r_1,r_2}_\al( (0, \tau|], {\mathcal P}; X)$ to be the space of $X$-valued processes with the norms
\begin{align*}
\|f\|_{ {\mathcal L}^r_\al( (0, \tau|], {\mathcal P}; X)}^r
& = {\mathbb E} \int_0^\tau s^{\al r} \|f(s,\cdot)\|_X^rds, \\
\|f\|_{{\mathcal M} ^{r_1,r_2}_\al( (0, \tau|], {\mathcal P}; X)}^{r_2}
& = {\mathbb E} \big(\int_0^\tau s^{\al r_1} \|f(s,\cdot)\|_X^{r_1}ds \big)^{\frac{r_2}{r_1}}.
\end{align*}
Note that ${\mathcal M} ^{r,r}_\al( (0, \tau|], {\mathcal P}; X) = {\mathcal L}^r_\al( (0, \tau|], {\mathcal P}; X)$. 
In particular, if $\tau =\infty$, then we denote
\begin{align*}
 {\mathcal L}^r_\al( (0, \infty|], {\mathcal P}; X) & = {\mathcal L}^r_\al( \Om \times (0, \infty) , {\mathcal P}; X),\\
{\mathcal M}^{r_1,r_2}_\al( (0, \infty|], {\mathcal P}; X)
& = {\mathcal M}^{r_1,r_2}_\al( \Om \times (0, \infty), {\mathcal P}; X).
\end{align*}
Note that the elements of ${\mathcal L}^r_\al( (0, \tau|], {\mathcal P}; X)$ and ${\mathcal M} ^{r_1,r_2}_\al( (0, \tau|], {\mathcal P}; X) $ are treated as functions rather than distributions or classes of equivalent functions.

From now on, we denote $ X(\R)$ for Banach space defined in $\R$ by $ X$. We also denote ${\mathcal L} ^q( (0, \infty |], {\mathcal P}; X (\R)) $ and ${\mathcal M} ^{p,q}_{a} ( (0, \infty |], {\mathcal P}; X (\R)) $ by $ {\mathcal L}^q_a X $ and ${\mathcal M} ^{p,q}_{a} X $ for Banach space $X$, respectively. We also denote $ L^q_a (0, \infty; X(\R))$ by $ L^q_a X $.

\section{Stochastic Stokes equations}
\label{Proof1}
\setcounter{equation}{0}

For simplicity, we assume that $2{\mathbb F} (0) = {\mathbb I}$ is the identity matrix. Let $\si ({\mathbb A}) : = {\mathbb F} ({\mathbb A}) - {\mathbb F}(0)  $. Accordingly, the first equations in \eqref{maineq2} are denoted by
\begin{align*}
du = \Big( \De u - \na p + {\rm div }\,\big( \si(Du) D u - u \otimes u \big) \Big) dt + g dB_t.
\end{align*}

Thus, the following initial  value problem of the Stokes equations in $\R\times (0,\infty)$ is necessary:
\begin{align}\label{maineq-stokes}
\begin{array}{l}\vspace{2mm}
dw = ( \De w - \na \pi + \mbox{div}{\mathcal F} ) dt + g dB_t, \qquad {\rm div} \, w =0 \,\, \mbox{ in } \,\, 
 \R\times (0,\infty),\quad 
 w|_{t=0}= u_0.
\end{array}
\end{align}

\begin{defin}[Weak solution for the stochastic Stokes equations]
\label{D21-2}
Let $u_0$, ${\mathcal F}$ and $g$ satisfy the assumption in Theorem \ref{theorem0410-2}.
We say that $w$ is a weak solution to \eqref{maineq-stokes} if $w \in {\mathcal L}^q_\al L^p $ is a progressively measurable process and
\begin{align*}
&\int_{\R} w(t,x) \cdot \Phi(x) dx
- \int_{\R} u_0(x) \cdot \Phi(x) dx \\
&= \int^t_0 \int_{\R} \Big( w \cdot \Delta \Phi +{\mathcal F} :\nabla \Phi)\Big) dxdt
+ \int_0^t \int_{\R}  g  \Phi dx dB_t
\end{align*}
for all $ 0 < t < \infty $ with probability $1$ for every $\Phi \in C^\infty_0( \R )$ with $\mbox{div}_x\,\Phi=0$.
\end{defin}

 The solution $w$ is decomposed by $ w= w^1 + w^2 + w^3$, where 
\begin{align}
w^1_i (x,t) & = \int_{\R} \Ga (x-y, t) u_{0i} (y) dy,\\
w^2_i (x,t) & = \int_0^t \int_{\R} \big( \delta_{ij}\Ga (x-y, t-s) - R_i R_j \Ga(x-y, t-s) \big) D_{y_j} \, {\mathcal F}_{ij} (y,s) dyds,\\
w^3_i(x,t) & = \int_0^t \int_{\R} \Ga(x -y, t-s) g_i (y,s) dy dB_s.
\end{align}
Here, $\de_{ij}$ is Kronecker Dirac-delta, $ \Ga$ is fundamental solution of heat equation in $\R$ and $R_i$ is $n$-dimensional Riesz transform.

\begin{theo}\label{theo0328-2}

For $ 1 < p< \infty$, $ 1 \leq q \leq \infty$ and $ 0\leq \al, \, a $, 
\begin{align}
\begin{split}
\| w^1\|_{L^q_a L^p}  \leq c \| u_0\|_{\dot B^{-\frac2q -2a}_{p,q}}, & \quad 
\| w^1\|_{L^q_a \dot B^\al_{p, q} }   \leq c \| u_0\|_{\dot B^{\al-\frac2q -2a}_{p,q}},\quad 
\| D w^1\|_{L^\infty L^\infty  }   \leq c \| D u_0\|_{L^\infty }.
\end{split}
\end{align}

\end{theo}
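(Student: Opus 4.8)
The plan is to exploit that $w^1(\cdot,t)=e^{t\De}u_0$ is the heat extension of the initial datum, so that all three bounds reduce to mapping properties of the heat semigroup between the relevant spaces. The one analytic input I would isolate first is the frequency-localized smoothing estimate
\begin{align*}
\| e^{t\De}\De_j f\|_{L^p} \leq c\, e^{-c_0 t 2^{2j}} \|\De_j f\|_{L^p}, \qquad j \in \Z,\ t>0,
\end{align*}
valid for $1\le p\le\infty$ with $c,c_0$ independent of $j,t$. This is a Bernstein-type bound: writing $e^{t\De}\De_j f = \tilde K_{j,t}*\De_j f$ with $\widehat{\tilde K_{j,t}}(\xi)=\hat{\tilde\Psi}(2^{-j}\xi)e^{-4\pi^2 t|\xi|^2}$, where $\hat{\tilde\Psi}=1$ on $\mathrm{supp}\,\hat\Psi$, a change of variables $\xi\mapsto 2^j\xi$ reduces the $L^1$-norm of the kernel to the case $j=0$, where the annular frequency support forces $\|\tilde K_{j,t}\|_{L^1}\le c\,e^{-c_0 t 2^{2j}}$; Young's inequality then gives the claim.

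For the second and main estimate I would combine this with the commutation $\De_j e^{t\De}=e^{t\De}\De_j$. Expanding the Besov norm and invoking Tonelli to interchange the time integral with the dyadic sum,
\begin{align*}
\|w^1\|_{L^q_a\dot B^\al_{p,q}}^q
=\sum_{j\in\Z}2^{j\al q}\int_0^\infty t^{aq}\|\De_j e^{t\De}u_0\|_{L^p}^q\,dt
\le c\sum_{j\in\Z}2^{j\al q}\|\De_j u_0\|_{L^p}^q\int_0^\infty t^{aq}e^{-c_0 q t 2^{2j}}\,dt.
\end{align*}
The scalar integral equals $C\,2^{-2j(aq+1)}$ by the Gamma function, and collecting exponents gives $\sum_{j}2^{jq(\al-2a-2/q)}\|\De_j u_0\|_{L^p}^q=\|u_0\|_{\dot B^{\al-2/q-2a}_{p,q}}^q$, which is the desired bound. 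Keeping the $\ell^q$-sum outside the time integration throughout is what makes the summation clean; a naive triangle inequality in time would only produce an $\ell^1$-type sum in $j$ and lose the estimate.

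The first estimate has an $L^p$ target rather than $\dot B^0_{p,q}$, so I would instead route it through the heat-semigroup characterization of homogeneous Besov norms (see, e.g., \cite{BL}): for $s>0$ one has $\|u_0\|_{\dot B^{-s}_{p,q}}\approx(\int_0^\infty t^{sq/2}\|e^{t\De}u_0\|_{L^p}^q\,\tfrac{dt}{t})^{1/q}$. Taking $s=2a+2/q>0$ turns the right-hand side into exactly $\|w^1\|_{L^q_a L^p}$, yielding the equivalence and hence the inequality. The third estimate is immediate from $L^\infty$-contractivity of the heat semigroup: since $Dw^1=e^{t\De}Du_0$ and $\|\Ga(\cdot,t)\|_{L^1}=1$, Young's inequality gives $\|Dw^1(t)\|_{L^\infty}\le\|Du_0\|_{L^\infty}$ uniformly in $t$, and the $L^\infty$-in-time norm is just the supremum.

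The main obstacle is not any single step but controlling the low-frequency ($j\to-\infty$) behaviour correctly in the homogeneous setting: this is precisely why the first estimate is handled through the heat characterization rather than a direct dyadic summation against an $L^p$ norm, and why the Gamma-integral in the second estimate must be carried out with the weight $t^{aq}$ kept inside the sum. I would also treat the endpoint $q=\infty$ separately, where the time integral is replaced by $\sup_t t^{a}e^{-c_0 t 2^{2j}}\sim 2^{-2aj}$ and the same bookkeeping applies.
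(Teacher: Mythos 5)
Your proposal is correct, and it is worth noting that the paper does not actually prove Theorem \ref{theo0328-2} at all: it simply cites Triebel \cite{Tr}. Your argument supplies a self-contained proof, and it is the standard one underlying such references. The frequency-localized smoothing bound $\|e^{t\De}\De_j f\|_{L^p}\le c\,e^{-c_0t2^{2j}}\|\De_jf\|_{L^p}$ that you isolate is exactly the same engine the paper itself uses elsewhere, namely estimate \eqref{0316-1} in the appendix proof of Theorem \ref{theorem0410-1}, so your route is fully consistent with the paper's toolkit; the Gamma-integral bookkeeping $2^{j\al q}\cdot 2^{-2j(aq+1)}=2^{jq(\al-2a-2/q)}$ is right, and your remark that a naive triangle inequality in time would degrade the $\ell^q$ dyadic sum to $\ell^1$ identifies precisely why the $L^p$-target estimate must instead go through the heat-semigroup characterization of negative-order homogeneous Besov norms. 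One caveat worth recording: at the joint endpoint $q=\infty$, $a=0$, the first inequality would assert $\sup_t\|e^{t\De}u_0\|_{L^p}\lesssim\|u_0\|_{\dot B^{0}_{p,\infty}}$, which is false in general, and indeed your characterization argument needs $s=2a+2/q>0$; this is a defect in the theorem's stated range ($1\le q\le\infty$, $a\ge 0$) rather than in your proof, and your separate treatment of $q=\infty$ covers all cases in which $s>0$.
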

(See \cite{Tr} for the reference)

\begin{theo}\label{theo0328-1}
\begin{itemize}
\item[(1)]
Let $ 1 < p_1 \leq  p < \infty$, $ 1 < q_1 \leq  q< \infty$ and $ 0 \leq  a \leq  a_1$ such that $\frac1q + a +1 = \frac1{q_1} + a_1 +\frac12 +\frac{n}{2p_1} -\frac{n}{2p}$. Then, 
\begin{align}\label{0307-2}
\| w^2\|_{L^q_a L^p} \leq c \| {\mathcal F} \|_{L^{q_1}_{a_1} L^{p_1}}.
\end{align}

\item[(2)]
Let $ 1 < p_1 \leq p < \infty$, $ 1 \leq q_1 \leq q \leq \infty$, $ 0 < \be_1 \leq \be$ and $0 < a \leq a_1 $ such that $-1 -\be +\be_1 < \frac{n}{p_1} -\frac{n}p < 1 -\be+\be_1$ and $ \frac1q +1 +a = \frac1{q_1} +a_1 +\frac12 +\frac{\be}2 -\frac{\be_1}2 +\frac{n}{2p_1} - \frac{n}{2p} $. Then, 
\begin{align}\label{250122-1}
\| w^2\|_{L^q_a \dot B^\be_{p,q}} \leq c \| {\mathcal F} \|_{ L^{q_1}_{a_1}\dot B^{\be_1}_{p_1,q} }.
\end{align}

\item[(3)]
Let  $ 1 < p_1, \, \, q_1 < \infty $,  $ 0 < \be < 2$ and $ 0 \leq a_1$  such that 
$ \frac{\be}2 = \frac{n}{2p_1} +\frac1{q_1} +a_1 $. Then, 
\begin{align}\label{0122-3}
\| D w^2 \|_{ L^\infty  L^\infty} 
& \leq c \| {\mathcal F} \|_{ L^{q_1}_{a_1} \dot W^\be_{p_1 } }.
\end{align}
\end{itemize}
\end{theo}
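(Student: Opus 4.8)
The three bounds share a common structure, and the plan is to reduce each of them to a one–dimensional weighted convolution inequality in the time variable. Writing $\mathbf{P}=\bI-\na\De^{-1}\divg$ for the Helmholtz–Leray projection, so that in components its composition with the heat kernel has the kernel $\delta_{ij}\Ga(\cdot,\tau)-R_iR_j\Ga(\cdot,\tau)$, the definition of $w^2$ reads
\begin{align*}
w^2(t)=\int_0^t \mathcal{K}_{t-s}\,{\mathcal F}(s)\,ds,\qquad \mathcal{K}_\tau:=e^{\tau\De}\,\mathbf{P}\,\divg .
\end{align*}
Thus it suffices to first establish, in each case, a sharp fixed–time smoothing estimate $\|\mathcal{K}_\tau {\mathcal F}\|_Y\le c\,\tau^{-\ga}\|{\mathcal F}\|_X$ for the appropriate spatial pair $(X,Y)$, and then to insert it into the time integral and absorb the temporal weights.

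For the fixed–time step I would combine three ingredients: the $L^r$–boundedness of the Riesz transforms $R_iR_j$ for $1<r<\infty$ (which controls $\mathbf{P}$), the heat smoothing $\|e^{\tau\De}f\|_{L^p}\le c\,\tau^{-\frac n2(\frac1{p_1}-\frac1p)}\|f\|_{L^{p_1}}$ for $p_1\le p$, and the fact that the $\divg$ in $\mathcal{K}_\tau$ costs exactly one derivative, i.e.\ a factor $\tau^{-1/2}$. In case (1) this yields $\ga=\tfrac12+\tfrac n2(\tfrac1{p_1}-\tfrac1p)$. In case (2) I would run the same computation frequency by frequency: on the Littlewood–Paley block $\De_k$ the multiplier of $\mathcal{K}_\tau$ is of size $O\!\big(2^{k}e^{-c\tau 2^{2k}}\big)$, and Bernstein's inequality supplies the gain $2^{kn(\frac1{p_1}-\frac1p)}$ from $L^{p_1}$ to $L^p$; summing the resulting series against the Besov weights $2^{k\be q}$ produces $\ga=\tfrac12+\tfrac{\be-\be_1}2+\tfrac n2(\tfrac1{p_1}-\tfrac1p)$. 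In case (3) I would write ${\mathcal F}=I_\be h$ with $\|h\|_{L^{p_1}}=\|{\mathcal F}\|_{\dot W^\be_{p_1}}$, using the lifting from \eqref{0219-1}; the kernel of $D\,\mathcal{K}_\tau\,I_\be$ is homogeneous of degree $2-\be$ smoothed by the Gaussian, hence lies in $L^{p_1'}$ with norm $\le c\,\tau^{-1+\frac\be2-\frac n{2p_1}}$, and Young's inequality gives $\|D\mathcal{K}_\tau {\mathcal F}\|_{L^\infty}\le c\,\tau^{-(1-\frac\be2+\frac n{2p_1})}\|{\mathcal F}\|_{\dot W^\be_{p_1}}$.

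For the temporal step, inserting the fixed–time bound into $w^2(t)=\int_0^t\mathcal{K}_{t-s}{\mathcal F}(s)\,ds$ and setting $F(s)=s^{a_1}\|{\mathcal F}(s)\|_X$ reduces (1) and (2) to the weighted Hardy–Littlewood–Sobolev (Stein–Weiss) inequality
\begin{align*}
\Big\|\,t^{a}\!\int_0^t (t-s)^{-\ga}s^{-a_1}F(s)\,ds\,\Big\|_{L^q(0,\infty)}\le c\,\|F\|_{L^{q_1}(0,\infty)} ,
\end{align*}
which holds when $0<\ga<1$, $q_1\le q$, and the homogeneity identity $\frac1q+a+1=\frac1{q_1}+a_1+\ga$ is satisfied. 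A direct substitution of the two values of $\ga$ found above shows that this identity is exactly the stated parameter relation in (1) and in (2), while the admissibility range $0<\ga<1$ in case (2) translates verbatim into the hypothesis $-1-\be+\be_1<\frac n{p_1}-\frac np<1-\be+\be_1$. For (3), since the target norm is $L^\infty$ in time, the Stein–Weiss exponent degenerates and I would instead apply H\"older in $s$ with exponents $q_1,q_1'$; the arising Beta integral $\int_0^1(1-\sigma)^{-\delta q_1'}\sigma^{-a_1q_1'}\,d\sigma$ is finite and the power of $t$ vanishes exactly when $\delta+a_1=\frac1{q_1'}$ with $\delta=1-\frac\be2+\frac n{2p_1}$, which is the relation $\frac\be2=\frac n{2p_1}+\frac1{q_1}+a_1$ of (3).

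The main obstacle I anticipate is twofold. First, proving the fixed–time smoothing estimate in case (2) with the sharp power $\tau^{-\ga}$ while keeping the $\ell^q$ Besov summation under control: this is where the frequency–localized Bernstein and multiplier analysis must be carried out carefully, and where the two–sided bound on $\frac n{p_1}-\frac np$ enters to guarantee $0<\ga<1$. Second, handling the endpoint configurations in the temporal convolution — the borderline $\ga\to1$ in (1)–(2) and the degenerate weights $a_1\to0$ or $\delta q_1'\to1$ in (3) — which are forced to the boundary only by equality in the hypotheses but nonetheless require separate attention (for instance through a Lorentz–space refinement of Stein–Weiss, or by exploiting the strictness built into $1<q_1\le q$).
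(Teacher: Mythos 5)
Your proposal is correct and follows essentially the same route as the paper: a fixed--time smoothing estimate for the kernel $e^{\tau\Delta}\mathbf{P}\,\mathrm{div}$, followed by the weighted Hardy--Littlewood--Sobolev (Stein--Weiss) inequality in time for (1)--(2) (the paper cites Theorem 1.2 of Dou--Ma for exactly this), and by H\"older in $s$ with the Beta integral for (3), including your identification of the two--sided condition on $\frac{n}{p_1}-\frac{n}{p}$ with $0<\gamma<1$ and of the scaling relation in (3) with the vanishing of the power of $t$. The only difference is in case (2), where you derive the fixed--time Besov bound by Littlewood--Paley blocks and Bernstein's inequality while the paper interpolates integer--order derivative estimates of $\nabla\Gamma_t$ via Proposition \ref{prop0215}; both yield the same estimate \eqref{250122-2}, and your explicit flagging of the endpoint $a_1=0$ in (3) is if anything more careful than the paper, which applies the same Beta--integral argument without comment.
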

We proved Theorem \ref{theo0328-1} in Appendix \ref{prooftheorem0328-1}.

\begin{theo}\label{theorem0410-1}
Let $ 1 < p < \infty$, $ 2 \leq q \leq \infty$  and $\al \in {\mathbb R}$. Let  $ 0 \leq a <  \frac12 -\frac1q$ if $ 2 < q$ and $ a =0$ if $ q =2$.
\begin{align*}
\|w^3\|_{{\mathcal L} ^q_a \dot B^\al_{p,q} }
\lesssim \|g\|_{{\mathcal L} ^q_a  \dot B^{\al-1}_{p,q} },\qquad 
\| D w^3\|_{{\mathcal L} ^\infty   L^\infty  } 
 \leq c \Big( \| D_x g \|_{{\mathcal L} ^q_a L^p } + \| D^2_x g \|_{{\mathcal L} ^q_a  L^p } \Big).
\end{align*}
For $1 < p_2 \leq p, \,\, 2 \leq q$ and $ a \leq a_2$ with $\big(\frac{n}{p_2}-\frac{n}{p}\big) + 2(a_2-a) = 1$, 
\begin{align}\label{w3}
\|w^3\|_{{\mathcal L} ^q_a L^p }
\lesssim \|g\|_{{\mathcal L} ^q_{a_2}  L^{p_2} }.
\end{align}

\end{theo}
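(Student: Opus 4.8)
The three estimates share a common engine: since $g$ is predictable and belongs to ${\mathcal T}$, the stochastic convolution $w^3(t)=\int_0^t \Ga(t-s)*g(s)\,dB_s$ is well defined, and the Burkholder--Davis--Gundy (BDG) inequality reduces the $q$-th moment of $w^3$ to the $\frac q2$-th moment of its quadratic variation $\int_0^t |\Ga(t-s)*g(s)|^2\,ds$. Because $2\le p<\infty$ makes $L^p(\R)$ a martingale type $2$ (2-smooth) space, I would apply the vector-valued BDG inequality directly at the level of the $L^p_x$ norm, obtaining
\[
{\mathbb E}\,\|w^3(t)\|_{L^p}^q \;\lesssim\; {\mathbb E}\Big(\int_0^t \|\Ga(t-s)*g(s)\|_{L^p}^2\,ds\Big)^{q/2},
\]
and similarly for each frequency block $\De_j w^3$. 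The essential mechanism is that squaring the heat kernel and integrating in time gains two derivatives at the level of the square, hence exactly one derivative for $w^3$; this is why the Besov index improves only by $1$ (from $\al-1$ to $\al$), in contrast with the two-derivative gain of the deterministic estimate in Theorem \ref{theo0328-1}.

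For the first inequality, I would localize in frequency. Using $\|\De_j \Ga(\tau)*h\|_{L^p}\lesssim e^{-c2^{2j}\tau}\|\De_j h\|_{L^p}$ together with the BDG bound above and Minkowski's inequality in time, the problem reduces, with $G_j(s):=\|\De_j g(s)\|_{L^p}$, to the scalar weighted convolution estimate
\[
\int_0^\infty t^{aq}\Big(\int_0^t e^{-2c2^{2j}(t-s)}G_j(s)^2\,ds\Big)^{q/2}dt \;\lesssim\; 2^{-jq}\int_0^\infty s^{aq}G_j(s)^q\,ds .
\]
Multiplying by $2^{j\al q}$, summing over $j\in\Z$, and taking ${\mathbb E}$ then yields the $\dot B^\al_{p,q}$ estimate. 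When $a=0$ the displayed inequality is immediate from Young's inequality, since $\|e^{-2c2^{2j}\cdot}\|_{L^1}\sim 2^{-2j}$; for $a>0$ one must transfer the weight $t^{aq}$ to $s^{aq}$ across the convolution, and I expect this weighted convolution lemma to be the main obstacle. Splitting the $s$-integral at $s=t/2$ reduces it to the $L^{q/2}$-boundedness of a Hardy-type operator, whose weight is admissible precisely under the hypothesis $a<\frac12-\frac1q$ (with $a=0$ forced when $q=2$); I would isolate this as a separate lemma, in the spirit of the paper's appendices.

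The third inequality follows the same route without frequency localization: after the $L^p$-valued BDG bound I would use the smoothing estimate $\|\Ga(\tau)*g(s)\|_{L^p}\lesssim \tau^{-\frac n2(\frac1{p_2}-\frac1p)}\|g(s)\|_{L^{p_2}}$, so that the quadratic variation is controlled by a fractional integral in time with kernel $\tau^{-(\frac{n}{p_2}-\frac{n}{p})}$. The resulting weighted Hardy--Littlewood--Sobolev inequality connecting the weights $t^{aq}$ and $s^{a_2q}$ is exactly scale-balanced by the stated relation $(\frac{n}{p_2}-\frac{n}{p})+2(a_2-a)=1$, which is also what keeps the time kernel integrable near $\tau=0$.

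The second (pointwise-in-time $L^\infty$) estimate cannot be reached by the moment computation above, and here I would use the factorization method: writing $w^3(t)=c_\te\int_0^t (t-s)^{\te-1}e^{(t-s)\De}Y(s)\,ds$ with $Y(s)=\int_0^s (s-r)^{-\te}e^{(s-r)\De}g(r)\,dB_r$ and choosing $\frac1q<\te<\frac12$ (which forces $q>2$, consistent with the hypotheses). The exponent $\te<\frac12$ keeps $Y$ finite after BDG, while $\te>\frac1q$ lets the deterministic fractional integral produce a bound that is uniform in $t$. Controlling $Y$ in ${\mathcal L}^q_a\dot W^1_p$ and ${\mathcal L}^q_a\dot W^2_p$ and then invoking a Sobolev embedding into $\dot W^1_\infty$ yields the estimate; since $p>n$ the passage to $L^\infty_x$ requires bracketing the critical smoothness $\frac{n}{p}$ between two integer-order Sobolev norms via a low/high frequency split, which is the origin of the two terms $D_x g$ and $D_x^2 g$ on the right-hand side.
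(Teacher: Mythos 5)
Your treatment of the first and third estimates follows essentially the same route as the paper. For the Besov bound, the paper invokes Theorem 1.1 of \cite{JML} (stochastic maximal $L^p$-regularity, i.e.\ the square-function form of BDG in $L^p$) together with Minkowski's inequality, the frequency-localized kernel decay $\|\Ga_{t-s}*g*\phi_k\|_{L^p}\le ce^{-c_0(t-s)2^{2k}}\|g*\phi_k(s)\|_{L^p}$, and then exactly the weighted time-convolution computation you describe: a H\"older/Fubini argument whose key integral $\int_0^t s^{-2a\frac{q}{q-2}}e^{-c_0\frac{q}{q-2}(t-s)2^{2k}}ds\le ct^{-2a\frac{q}{q-2}}\min(t,2^{-2k})$ is admissible precisely when $a<\frac12-\frac1q$, and which produces the factor $2^{-qk}$ (one derivative per power of $w^3$). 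For \eqref{w3} the paper simply cites Lemma 4.3 of \cite{CY}; your BDG-plus-heat-smoothing-plus-weighted-HLS sketch is a faithful reconstruction of what that lemma does (compare the citation of \cite{DM} in the deterministic analogue). Where you genuinely diverge is the $L^\infty$ gradient estimate: you use the Da Prato--Kwapie\'n--Zabczyk factorization with $\frac1q<\te<\frac12$, whereas the paper first reduces via the real-interpolation embedding $\dot B^{n/p}_{p,1}=(L^p,\dot W^1_p)_{p,1-n/p}$ (valid for $p>n$) to bounds on ${\mathbb E}\sup_t\|Dw^3(t)\|^q_{L^p}$ and ${\mathbb E}\sup_t\|Dw^3(t)\|^q_{\dot W^1_p}$, and then applies the maximal estimate for stochastic convolutions of Veraar--Weis \cite{VWK}. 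Your route is more self-contained (no maximal-regularity black box for the sup-in-time step); the paper's route reaches the endpoint $q=2$ and avoids introducing the auxiliary process $Y$.

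Two concrete range gaps remain in your proposal. First, your BDG engine is justified by the martingale type 2 property of $L^p$, which requires $2\le p<\infty$, while the theorem is stated for all $1<p<\infty$; for $1<p<2$ one needs the square-function ($\gamma$-radonifying) form of the stochastic estimate in $L^p$, which is exactly what the paper's appeal to \cite{JML} supplies. This is fixable, but as written your argument does not cover $1<p<2$. Second, your parenthetical claim that the constraint $q>2$ from $\frac1q<\te<\frac12$ is ``consistent with the hypotheses'' is wrong: the theorem includes $q=2$ (with $a=0$), and the second inequality is asserted in that case as well; the factorization method degenerates there, whereas the maximal estimate of \cite{VWK} used by the paper holds for $q=2$. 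You would need a separate argument (or the Veraar--Weis route) to close that endpoint.
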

We prove Theorem \ref{theorem0410-1} in Appendix \ref{proofoflemma0719-1}.

 From Theorem \ref{theo0328-2} to Theorem \ref{theorem0410-1}, we have 
 \begin{theo}\label{theorem0410-2}
 \begin{itemize}
\item[(1)]
Let $ 1 < p_1 \leq  p < \infty$, $ 1 < q_1 \leq q< \infty$ and $ 0 \leq  a \leq  a_1$ such that $\frac1q + a +1 = \frac1{q_1} + a_1 +\frac12 +\frac{n}{2p_1} -\frac{n}{2p}$. Let $1 < p_2 \leq p <\infty, \quad 2 \leq q < \infty$ and $ a \leq a_2$ with $\big(\frac{n}{p_2}-\frac{n}{p}\big) + 2(a_2-a) = 1$. Then, 
\begin{align}
\| w\|_{{\mathcal L} ^q_a  L^p  } \leq c \| u_0\|_{\dot{\mathbb B}^{-\frac2q -2a}_{p,q}  } + \|g\|_{{\mathcal L} ^q_{a_2}   L^{p_2}  } + \| {\mathcal F} \|_{{\mathcal U}^{p_1, q_1}_{a_1}   L^{p_1}   }.
\end{align}

\item[(2)]
Let $ 1 < p_1 \leq p < \infty$, $ 1 \leq q_1 \leq q \leq \infty$, $ 0 < \be_1 \leq \be$ and $0 \leq  a \leq a_1 $ such that $-1 -\be +\be_1 < \frac{n}{p_1} -\frac{n}p < 1 -\be+\be_2$ and $ \frac1q +1 +a = \frac1{q_1} +a_1 +\frac12 +\frac{\be}2 -\frac{\be_1}2 +\frac{n}{2p_1} - \frac{n}{2p} $. Then, 
\begin{align}
\| w\|_{{\mathcal L}^q_a \dot B^\be_{pq}} \leq c \| u_0\|_{\dot {\mathbb B}^{\be-\frac2q -2a}_{p,q}  } + \|g\|_{{\mathcal L} ^q_a \dot B^{\be-1}_{p,q}  } + \| {\mathcal F} \|_{ {\mathcal U}^{p_1, q_1}_{a_1} \dot B^{\be_1}_{p_1,q} }.
\end{align}

\item[(3)]
Let  $  p_1 \leq p, \, \, q_1 \leq q $,  $ 0 < \be < 2$ and $ 0 \leq a_1$  such that 
$ \frac{1}2  \be= \frac{n}{2p_1} +\frac1{q_1} +a_1 $.  Then, 
\begin{align}
\begin{split}
\| D w \|_{ {\mathcal L} ^\infty   L^\infty  } 
&\leq c \| D u_0\|_{{\mathbb L}^\infty  } 
+ \| D g\|_{{\mathcal L} ^q L^{p} }  + \|D^2 g\|_{{\mathcal L} ^q_a  L^{p} } + \| {\mathcal F} \|_{{\mathcal L} ^{q_1}_{a_1} \dot W^\be_{p_1 } }.
\end{split}
\end{align}
\end{itemize}

 \end{theo}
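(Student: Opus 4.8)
The plan is to exploit the linearity of the stochastic Stokes system \eqref{maineq-stokes}. Since its solution decomposes as $w = w^1 + w^2 + w^3$ and every target norm in the statement is subadditive, it suffices to bound the three pieces separately and add. Concretely, I would estimate the initial-data part $w^1$ by Theorem \ref{theo0328-2}, the forcing part $w^2$ by Theorem \ref{theo0328-1}, and the stochastic part $w^3$ by Theorem \ref{theorem0410-1}, then combine by the triangle inequality. The first bookkeeping task is to verify that the hypotheses imposed in each of the three parts of the statement are exactly the union of the parameter restrictions demanded by the cited theorems: for part (1) the scaling relation $\frac1q + a + 1 = \frac1{q_1} + a_1 + \frac12 + \frac{n}{2p_1} - \frac{n}{2p}$ is precisely the hypothesis of Theorem \ref{theo0328-1}(1), while $\big(\frac{n}{p_2} - \frac{n}{p}\big) + 2(a_2 - a) = 1$ is that of \eqref{w3}, and similarly for parts (2) and (3). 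Once this matching is checked, the three estimates apply verbatim to their respective pieces.

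The one genuinely non-automatic point is the passage from the \emph{deterministic} bounds available for $w^1$ and $w^2$ to the \emph{stochastic} norms in the conclusion. The heat and Oseen kernels defining $w^1$ and $w^2$ act pathwise in $\om$, so Theorems \ref{theo0328-2} and \ref{theo0328-1} hold for almost every $\om$. I would then raise each pathwise inequality to the $q$-th power and apply the expectation $\mathbb{E}$. For $w^1$ this gives $\mathbb{E}\|w^1\|_{L^q_a L^p}^q \le c\,\mathbb{E}\|u_0\|_{\dot B^{-\frac2q - 2a}_{p,q}}^q$, whose right side is by definition $c\,\|u_0\|_{\dot{\mathbb B}^{-\frac2q - 2a}_{p,q}}^q$. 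For $w^2$ the deterministic time integral carries exponent $q_1$ whereas the outer expectation uses exponent $q$, so $\mathbb{E}\|w^2\|_{L^q_a L^p}^q \le c\,\mathbb{E}\big(\int_0^\infty s^{a_1 q_1}\|\mathcal{F}\|_{L^{p_1}}^{q_1}\,ds\big)^{q/q_1}$, and the right side is precisely the $q$-th power of a mixed ($\mathcal{M}$-type) norm of $\mathcal{F}$ with inner exponent $q_1$ and outer exponent $q$. This is the structural reason the $\mathcal{F}$-contribution must appear in an $\mathcal{M}$-type space rather than an $\mathcal{L}$-type space, and Fubini--Tonelli is justified throughout by nonnegativity of the integrands.

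For $w^3$ no lifting is needed, since Theorem \ref{theorem0410-1} is already phrased in the stochastic norms $\mathcal{L}^q_a$; I would simply read off the contributions $\|g\|_{\mathcal{L}^q_{a_2}L^{p_2}}$, $\|g\|_{\mathcal{L}^q_a \dot B^{\be-1}_{p,q}}$, and $\|D_x g\|_{\mathcal{L}^q_a L^p} + \|D_x^2 g\|_{\mathcal{L}^q_a L^p}$ for parts (1), (2), (3). In the $L^\infty$ estimate of part (3) the same pathwise argument applies, but with the essential supremum over $\Om$ replacing the $L^q(\Om)$ integration, so that the sup-in-time, sup-in-$\om$ norm $\mathcal{L}^\infty L^\infty$ emerges after first using the pathwise bounds $\|Dw^1\|_{L^\infty L^\infty} \le c\|Du_0\|_{L^\infty}$ and $\|Dw^2\|_{L^\infty L^\infty} \le c\|\mathcal{F}\|_{L^{q_1}_{a_1}\dot W^\be_{p_1}}$. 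Finally, the progressive measurability of $w$ required by Definition \ref{D21-2} is inherited from that of $u_0$, $\mathcal{F}$ and $g$ together with the measurability of the convolution kernels, so $w$ lies in the asserted spaces. I expect the main obstacle to be purely organizational: correctly tracking the two distinct integrability exponents in the deterministic-to-stochastic lifting so that the mixed norms appear with the right inner and outer powers, rather than any substantially new analytic estimate.
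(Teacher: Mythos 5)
Your overall strategy coincides with the paper's own treatment (the paper offers no separate proof of this theorem beyond the phrase ``From Theorem \ref{theo0328-2} to Theorem \ref{theorem0410-1}, we have''): decompose $w = w^1+w^2+w^3$, apply Theorem \ref{theo0328-2} to $w^1$, Theorem \ref{theo0328-1} to $w^2$, Theorem \ref{theorem0410-1} to $w^3$, and lift the deterministic estimates to stochastic norms pathwise. Your handling of parts (1) and (2) is correct, and your observation that raising the pathwise $w^2$ bound to the $q$-th power before taking ${\mathbb E}$ forces the ${\mathcal F}$-contribution into a mixed ${\mathcal M}^{q_1,q}_{a_1}$-type norm (the ${\mathcal U}$ appearing in the statement) is precisely the bookkeeping point the paper leaves implicit.

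However, your lifting in part (3) contains a genuine flaw. You propose to take the essential supremum over $\Om$ of the pathwise inequality $\|Dw^2(\om)\|_{L^\infty L^\infty} \le c\,\|{\mathcal F}(\om)\|_{L^{q_1}_{a_1}\dot W^\be_{p_1}}$. That yields a bound by $\mathop{\mathrm{ess\,sup}}_{\om\in\Om}\|{\mathcal F}(\om)\|_{L^{q_1}_{a_1}\dot W^\be_{p_1}}$, which is \emph{not} dominated by the norm $\|{\mathcal F}\|_{{\mathcal L}^{q_1}_{a_1}\dot W^\be_{p_1}} = \big({\mathbb E}\,\|{\mathcal F}\|^{q_1}_{L^{q_1}_{a_1}\dot W^\be_{p_1}}\big)^{1/q_1}$ appearing on the right-hand side of the theorem: a random variable with finite $q_1$-th moment may well have infinite essential supremum, so the step fails as stated. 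The repair is to read ${\mathcal L}^\infty L^\infty$ the way the paper's Appendix \ref{proofoflemma0719-1} does for $w^3$, namely as ${\mathbb E}\sup_{t}\|\cdot\|_{L^\infty(\R)}$: take the expectation (not the essential supremum) of the pathwise bound and then apply Jensen's inequality on the probability space, ${\mathbb E}\,\|{\mathcal F}\|_{L^{q_1}_{a_1}\dot W^\be_{p_1}} \le \big({\mathbb E}\,\|{\mathcal F}\|^{q_1}_{L^{q_1}_{a_1}\dot W^\be_{p_1}}\big)^{1/q_1}$, and likewise bound ${\mathbb E}\,\|Du_0\|_{L^\infty}$ by $\|Du_0\|_{{\mathbb L}^\infty}$. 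With this one-line change part (3) follows, and the rest of your argument stands.
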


\section{Proof of Theorem \ref{T1}}
\label{nonlinear}
\setcounter{equation}{0}

Define the Banach space ${\mathcal X}$ and its closed subset ${\mathcal T}_R$ as
\begin{align*}
{\mathcal X} := {\mathcal L}^q_a \dot B^{\al}_{p,q} \cap {\mathcal L}^\infty \dot W^1_\infty \cap {\mathcal L}^q_a L^p, \quad 
 {\mathcal X}_R := \{u \in {\mathcal X} : \|u\|_{\mathcal X} \leq R \}.
\end{align*}
Here,  $0 <R \leq r_0$ is to be chosen later, where $ r_0$ is introduced in {\bf Assumption}.
For $s \geq 0$, we define  $ \te: [0, \infty) \ri [0, 1]$ by
\[\theta(s) = \max\{0,\min\{2-s/R,1\} \}.\]
For $v \in {\mathcal X} $, we define the map $\chi_{v(\om)} : [0,\infty) \rightarrow [0,1]$ by
\[
\chi_v := \chi_{v(\om)}(t) = \te (\eta_v(\om, t)  ), 
\]
where $\eta_v(\om, t) : = \|v(\om)\|_{ L^q_\al (0,t;\dot B^{\al}_{p,q} )} + \| D v(\om)\|_{L^\infty (0,t; L^\infty )} + \| v (\om) \|_{L^q_a (0, t; L^p  )}$.   For $\om \in \Om$, let us 
\begin{align}\label{t0}
t_0 (\om) : = \inf_{0< t< \infty} \{   \eta(\om, t) > 2R\}.
\end{align}
Note that for  $ t_0 < t$,    $\eta_v(\om, t) \geq 2R$ and  $\chi_{v(\om)}(t) =0$. Hence, we have 
\begin{align}\label{0529-1}
 \|\chi_v v\| _{L^q_a \dot B^{\al}_{p,q}  } +\|\chi_v D v\|_{L^\infty L^\infty  } +\|\chi_v v\|_{L^q_a L^p }  \leq \eta_v(\om,t_0) \leq 2R.
\end{align}

Let  $ u_0 \in {\mathbb T}, \,\, g \in {\mathcal T}$   with ${\rm div} \, u_0 = 0$ and $ {\rm div} \, g =0$, where ${\mathbb T}$ and ${\mathcal T}$ are defined in \eqref{def1} and \eqref{def2}.  Let 
 \begin{align}
 M_0 : = \| u_0\|_{{\mathbb T}} + \| g\|_{{\mathcal T}}.
 \end{align}

Given $v \in {\mathcal C}_R$, we consider the following  equations
\begin{equation}
\label{E40}
d w_v = \big( \De w_v + \na P_v - {\rm div } \, \Big( \chi_v^2 \big( \si ( D v) D v+ v \otimes v ) \big)\Big)dt + g dB_t,  \quad {\rm div}  \, w_v = 0, \quad w_v|_{t =0} = u_0.
\end{equation}
From the solvability of this problem (see Theorem \ref{theorem0410-2}), we can define the solution map by
\[
S(v)=w_v.
\]
We shall show the existence of a solution by using the Banach fixed point theorem.

\subsection{$S$ is bounded}
\label{subsection0414}

Take $a_1 = 2a, \,  p_1 =\frac{p}2$ and $ q_1 =\frac{q}2$  such that  $\frac1q +1 + a = \frac2{q} + 2a+\frac12 +\frac{n}{p} -\frac{n}{2p}$. Applying (1) of  Theorem \ref{theorem0410-2}, we have 
\begin{align}\label{0326-2}
 \|S(v)\|_{ {\mathcal L}^q_a L^p } & \leq c \big(M_0 + \| \chi_v^2 v \otimes v \|_{{\mathcal M} ^{\frac{q}2,q}_{2a} L^{\frac{p}2 } }
 + \| \chi_v^2\si (D v) D v \|_{{\mathcal M} ^{\frac{q}2,q}_{2a} L^{\frac{p}2 } } \big).
\end{align}

From (2) of Theorem \ref{theorem0410-2},  we have 
\begin{equation}
\label{E33-1}
\begin{split}
\|S(v)\|_{{\mathcal L}^q_a \dot B^{\al}_{p,q} } & \leq c \Big(M_0  + \| \chi_v^2 v \otimes v \|_{{\mathcal M} ^{\frac{q}2,q}_{2a} \dot B^{\al}_{\frac{p}2, q} }  + \| \chi_v^2 \si (Dv) D v \|_{{\mathcal L}^q_a \dot B^{\al-1}_{p,q} } \Big).
\end{split}
\end{equation}

Take $ 0 < \be \leq 1$,  $ p \leq p_1, \,\, q_1 =q$ and $a_1 =2 a$ such that $ \frac12 \be = \frac{n}{2p_1} +\frac1{q} +a $. Applying (3) of  Theorem \ref{theorem0410-2},   we have 
 \begin{align}\label{0326-1}
\| D S (v) \|_{{\mathcal L}^\infty L^\infty } 
& \leq c \big(M_0+ \| \chi_v^2 v \otimes v \|_{{\mathcal L} ^{q}_{a} \dot W^1_p } + \|  \chi_v^2 \si (D v) D v \|_{{\mathcal L} ^{q}_{a} \dot W^\be_{p_1} } \big).
\end{align}

Let us fix $\om \in \Om$.  By Lemma \ref{L21} and   \eqref{0529-1}, we have 
\begin{align}\label{eq1101-1}
\begin{split}
\|\chi_v^2 v \otimes v\|_{L^{\frac{q}2}_{2a} \dot B^{\al}_{\frac{p}2, q} } &= \big( \int_0^\infty \chi_v^q (t) t^{a q} \| v \otimes v \|^{\frac{q}2} _{\dot B^{\al}_{\frac{p}2, q} } dt \big)^\frac{2}q\\
& \leq c\big( \int_0^\infty \chi_v^q (t) t^{a q} \| v\|^{\frac{q}2}_{ L^p} \| v \|^{\frac{q}2}_{ \dot B^{\al}_{p,q} } dt \big)^\frac{2}q\\
& \leq c\| \chi_v v\|_{L^q_a L^p} \|\chi_v v\|_{L^q_a \dot B^\al_{p,q}}\\
& \leq c4R^2.
\end{split}
\end{align}
Similarly,   we have 
\begin{align}\label{0327-3}
\| \chi_v^2 v \otimes v\|_{ L^{\frac{q}2}_{2a} L^{\frac{p}2} } 
 \leq c \|\chi_v v\|_{L^q_a L^p}^{2 } \leq 4cR^2.
\end{align}

Note that $ | \si ({\mathbb G})| \leq c |{\mathbb G}|$ for $ |{\mathbb G}| \leq r_0$ and  $ \| Dv(t)\|_{L^\infty} \leq \eta(\om, t) \leq 2R \leq r_0$ for $ t \leq t_0$,  and so  $  \|\chi_v  \si (Dv)  \|_{L^q_a L^p }  
 \leq c   \| \chi_v D v\|_{ L^q_a L^p } $. From \eqref{0529-1}, we have 
\begin{align}\label{0327-2}
 \| \chi_v^2 \si (D v ) D v\|_{L^{\frac{q}2}_{2a} L^{\frac{p}2 } } 
 \leq c \|\chi_v  \si (Dv)  \|_{L^q_a L^p } \| \chi_v D v\|_{ L^q_a L^p } 
 \leq c   \| \chi_v D v\|^2_{ L^q_a L^p }  \leq cR^2.
\end{align}

Applying  Lemma \ref{L23} for $\rho({\mathbb G}) = \si ({\mathbb G}) {\mathbb G}$, we have 
\begin{align*}
\| \si( D v ) Dv \|^q_{ \dot B^{\al-1 }_{p,q} (\R) }  & \leq c       (D\rho)^* (2\| Dv\|_{L^\infty})  \| D v    \|_{\dot B^{\al -1  }_{p,q} }   \leq c       (D\rho)^* (2\| Dv\|_{L^\infty})   \|  v    \|_{\dot B^{\al   }_{p,q} }.
\end{align*}
From assumption ${\mathbb F}$ and $\si$, we have  $ (D\rho)^* (r ) \leq cr$ for $r \leq r_0$. Then,  $\|\chi_v   (D \rho)^* (2\| Dv\|_{L^\infty})  \|_{L^\infty(0, \infty) } \leq c \| \chi_v Dv\|_{L^\infty L^\infty} \leq cR$.  
From \eqref{0529-1}, we have 
\begin{align}\label{0327-5}
\begin{split}
& \| \chi_v^2 \si (Dv) D v\|_{ L^q_{a} \dot B^{\al-1 }_{p,q}} 
\leq c  \|\chi_v   (D\rho)^* (2\| Dv\|_{L^\infty})  \|_{L^\infty(0, \infty) }   \|  \chi_v  v    \|_{L^q_a\dot B^{\al   }_{p,q} } \leq c R^2.
\end{split}
\end{align}

From \eqref{0529-1},  we have 
\begin{align}\label{0115-1}
\| \chi_v^2 v \otimes v \|_{L^{q}_{a} \dot W^1_p } \leq c \|\chi_v v \|_{L^q_a L^p} \| \chi_v D v\|_{L^\infty L^\infty} \leq c R^2.
\end{align}

Note that   $   \be = \frac{n}{p_1} +\frac2{q} +2a $  such that    $  \be +\frac{n}p -\frac{n}{p_1} = \frac{n}{p_1} +\frac2{q} +2a  +\frac{n}p -\frac{n}{p_1} < \al -1$, from Lemma \ref{L22}, we have 
\begin{align}
 \| \si (D v) D v \|_{ \dot W^\be_{p_1} } \leq c  \| \si (D v) D v \|_{  \dot B^{\be +\frac{n}p -\frac{n}{p_1}}_{p,1} } \leq c\big( \| \si (D v) D v \|_{  L^p}  +  \| \si (D v) D v \|_{  \dot B^{\al-1}_{p,q} }\big).
\end{align}
 Hence, by  \eqref{0327-2}, \eqref{0327-5} and \eqref{0529-1}, we have
\begin{align}\label{0115-2}
\begin{split}
\| \chi_v^2 \si (D v) D v \|_{L^{q}_{a} \dot W^\be_{p_1} } 
& \leq c   R^2. 
\end{split}
\end{align}
From \eqref{0326-2} to \eqref{0115-2} and  from   \eqref{0529-1},  we have 
\begin{align*}
\|S(v)\|_{{\mathcal X}}
&= \|w\|_{{\mathcal X}}
\leq C_1(\|u_0\|_{{\mathbb T}} + \|g\|_{{\mathcal T}}
 + R^2) \leq C_1(\delta + R^2).
\end{align*}
We   take $\de = R^2$   so that
\begin{equation}
\label{E41}
\|S(v)\|_{{\mathcal X}} \le  C_1    R^{2}.
\end{equation}
If $R < \min(\frac1{2 C_1},\frac12 r_0)$, then $S : {\mathcal C}_R \ri {\mathcal C}_R $ is bounded operator.

\subsection{$S$ is contractive}

The following auxiliary result will be needed to prove that $S$ is contractive.

\begin{lemm}\label{lemma0719-2}
For $\om \in \Om$, $t > 0$ and $u, \, v \in {\mathcal C}_R $,
\begin{equation}
\label{E42}
|\chi_u(t) - \chi_v(t)| \leq R^{-1}|\eta_u (t) -\eta_v(t) |.
\end{equation}
\end{lemm}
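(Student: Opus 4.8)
The plan is to reduce the claim to the Lipschitz continuity of the auxiliary cutoff $\te$. By definition $\chi_u(t) = \te(\eta_u(\om,t))$ and $\chi_v(t) = \te(\eta_v(\om,t))$, so the asserted inequality \eqref{E42} is precisely the statement that $\te$ is Lipschitz with constant $R^{-1}$, evaluated at the two arguments $\eta_u(\om,t)$ and $\eta_v(\om,t)$. Thus the whole proof amounts to checking that $\te$ has Lipschitz constant $R^{-1}$.

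First I would make the piecewise-affine structure of $\te(s) = \max\set{0,\min\set{2-s/R,1}}$ explicit by unwinding the nested $\max$ and $\min$. For $0 \le s \le R$ one has $2 - s/R \ge 1$, hence $\te(s) = 1$; for $R \le s \le 2R$ one has $0 \le 2 - s/R \le 1$, hence $\te(s) = 2 - s/R$; and for $s \ge 2R$ one has $2 - s/R \le 0$, hence $\te(s) = 0$. Consequently $\te$ is continuous and piecewise affine, constant on $[0,R]$ and on $[2R,\infty)$ and of slope $-R^{-1}$ on $[R,2R]$.

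With this structure in hand, the bound $|\te(s_1) - \te(s_2)| \le R^{-1}|s_1 - s_2|$ for all $s_1,s_2 \ge 0$ follows because the maximal absolute slope of $\te$ equals $R^{-1}$; one may either invoke the mean value theorem on each affine piece together with continuity at the break points $s = R$ and $s = 2R$, or argue directly by splitting into the cases where $s_1,s_2$ lie in the same interval or in different intervals. Taking $s_1 = \eta_u(\om,t)$ and $s_2 = \eta_v(\om,t)$ then gives \eqref{E42}.

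I do not expect a genuine obstacle here: the content is the elementary observation that the cutoff $\te$ is $R^{-1}$-Lipschitz, and the only mild care needed is the bookkeeping of which branch of the $\max$/$\min$ is active on each of the three intervals $[0,R]$, $[R,2R]$, and $[2R,\infty)$.
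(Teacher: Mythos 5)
Your proposal is correct and is essentially the paper's own argument: the paper proves \eqref{E42} by enumerating the six possible positions of the pair $\eta_u(t),\eta_v(t)$ relative to the breakpoints $R$ and $2R$, which is exactly the case-splitting you describe when verifying that $\te$ is $R^{-1}$-Lipschitz across its three affine pieces. Your packaging --- first isolating the Lipschitz property of the scalar cutoff $\te(s)=\max\set{0,\min\set{2-s/R,1}}$ and then substituting $s=\eta_u(\om,t)$ and $s=\eta_v(\om,t)$ --- is a slightly tidier organization of the same elementary computation, and either of your two suggested verifications (mean value theorem on each piece plus continuity at $s=R,2R$, or direct case analysis) closes the argument.
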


\begin{proof}
See Appendix \ref{proofoflemma0719-2}.
\end{proof}

We shall show that there is $C_2>0$ such that for $u, \, v \in {\mathcal C}_R$,
\begin{equation}
\label{E43}
\|S(u)-S(v)\|_{{\mathcal X}} \le C_2 R \|u-v\|_{{\mathcal X}}.
\end{equation}
Note that $V = S(u) -S(v)$ and $P = P_u -P_v$ satisfy
\begin{align*}
V_t -\De V + \na P = {\rm div } \, \Big(\chi^2_u u \otimes u + \chi^2_u \si(D u) Du - \chi^2_v v \otimes v - \chi^2_v \si(Dv) Dv \Big), \quad {\rm div }\, V =0
\end{align*}
with $V|_{t=0} =0$.  As the same estimate  with \eqref{0326-2},  \eqref{E33-1}  and \eqref{0326-1}, we have 
\begin{align}\label{0720-1}
\begin{split}
\|S(u)-S(v)\|_{{\mathcal L}^q_\al \dot B^\al_{p,q}}
&\lesssim \|\chi^2_u u \otimes u - \chi^2_v v \otimes v \|_{ {\mathcal M} ^{\frac{q}2,q}_{2a} \dot B^{\al}_{\frac{p}2, q}} + \| \chi^2_u \si(D u) Du - \chi^2_v \si(Dv) Dv\|_{ {\mathcal L}^q_a \dot B^{\al-1}_{p,q} },\\
\|S(u)-S(v)\|_{{\mathcal L}^q_\al L^p }
&\lesssim \|\chi^2_u u \otimes u   - \chi^2_v v \otimes v  \|_{{\mathcal M} ^{q, \frac{q}{2}}_{2\al} L^{\frac{p}{2}} }   + \|  \chi^2_u \si(D u) Du -  \chi^2_v \si(Dv) Dv\|_{{\mathcal M} ^{q, \frac{q}{2}}_{2\al} L^{\frac{p}{2}} },\\
\|S(u)-S(v)\|_{{\mathcal L}^\infty L^\infty }
&\lesssim \|\chi^2_u u \otimes u   - \chi^2_v v \otimes v  \|_{{\mathcal L} ^{q}_{a} \dot W^1_p } + \|  \chi^2_u \si(D u) Du - \chi^2_v \si(Dv) Dv\|_{{\mathcal L} ^{q}_{a} \dot W^\be_{p_1} }.
\end{split}
\end{align}

We denote $\eta(u,v) :=\eta(\om, u, v) = \| u -v\|_{L^q_a L^p} + \| u -v\|_{L^q_a \dot B^\al_{p,q}} + \| D( u -v)\|_{L^\infty L^\infty}$ so that
$|\eta_u(t) - \eta_v (t) | \leq \eta(u,v)$ for all $\om \in \Om$ and $ 0 < t  <  \infty$.

Note that
\begin{align*}
\chi^2_u \si(D u) Du -  \chi^2_v \si(Dv) Dv& =   (\chi_u - \chi_v) \chi_u  \si(D u) Du + \chi_u \chi_v  \big( \si(D u)  Du - \si (Dv)   Dv  \big)  \\
& \quad + (\chi_u -\chi_v ) \chi_v \si(Dv) Dv.
\end{align*}

By direct calculation, we have 
\begin{align*}
\|  \chi_u \chi_v (D\rho)^* ( \| Du\|_{L^\infty} + \| Dv \|_{L^\infty}) \|_{L^\infty (0, \infty) }  &  \leq c \big( \|\chi_u   Du\|_{L^\infty L^\infty} + \|\chi_v  Dv \|_{L^\infty L^\infty}   \big) \leq cR,\\
\|  \sqrt{\chi_u}\sqrt{ \chi_v} (D^2\rho)^* ( \| Du\|_{L^\infty} + \| Dv \|_{L^\infty}) &  \leq c.
\end{align*}
From   Lemma \ref{L23} and \eqref{0529-1}, we have 
\begin{align}
\begin{split}
 & \|\chi_u \chi_v  \big(    \si ( D u )  Du- \si ( Dv) Dv \big)  \|_{L^q_a \dot B^{\al -1}_{p,q}} \\
  &  \leq c \Big(\|  \chi_u \chi_v (D\rho)^* ( \| Du\|_{L^\infty} + \| Dv \|_{L^\infty}) \|_{L^\infty (0, \infty) } \| Du -Dv\|_{L^q_a\dot B^{\al -1}_{p,q}  }\\
 & \quad  + \|  \sqrt{\chi_u}\sqrt{ \chi_v} (D^2\rho )^* ( \| Du\|_{L^\infty} + \| Dv \|_{L^\infty}) \|_{L^\infty (0, \infty) } \big( \| \sqrt{\chi_u} D u\|_{L^q_a\dot B^{\al -1}_{p,q} } + \|\sqrt{\chi_v} D v \|_{L^q_a\dot B^{\al -1}_{p,q} }    \big)  \| D u - Dv\|_{L^\infty L^\infty  }   \Big)\\
 & \leq c R \eta(u,v).
 \end{split}
 \end{align}

From direct calculation, we have 
\begin{align*}
&\|(\chi_u - \chi_v) \chi_v  \si(D v) Dv\|_{L^q_{a} \dot B^{\al-1}_{p,q}}\\
&\leq cR^{-1} |\eta_u (t) -\eta_v(t) | \big(  \|\sqrt{\chi_v}\si ( Dv)\|_{  L^q_a \dot B^{\al -1}_{p,q}} \|  \sqrt{\chi_v}   Dv\|_{L^\infty L^\infty}   
                       +  \| \sqrt{\chi_v}\si ( Dv)\|_{L^\infty L^\infty}\|   \sqrt{\chi_v}   Dv\|_{L^q_a  \dot B^{\al -1}_{p,q}}   \big) \\
& \leq cR \eta(u,v),  
\end{align*}

Hence, we have \begin{align}
\| \chi^2_u \si(D u) Du -  \chi^2_v \si(Dv) Dv \|_{L^q_a \dot B^{\al-1}_{p,q} }
\leq c R \eta(u,v).
\end{align}

Using similar calculation of  \eqref{eq1101-1}, \eqref{0327-3} \eqref{0327-2}, \eqref{0115-1} and \eqref{0115-2},  we have 
 
\begin{align}
\begin{split}
\| \chi^2_u u \otimes u -  \chi^2_v v \otimes v \|_{L^{\frac{q}{2}}_{2\al} L^{\frac{p}{2}} }
\leq c R \eta(u,v),\\
\| \chi^2_u \si(D u) Du -  \chi^2_v \si(Dv) Dv \|_{L^{\frac{q}{2}}_{2\al} L^{\frac{p}{2}} }
\leq c R \eta(u,v)\\
 \|\chi^2_u u \otimes u   - \chi^2_v v \otimes v  \|_{L ^{q}_{a} \dot H^1_p } \leq c R \eta(u,v),\\
\| \chi_u^2 u \otimes   u - \chi_v^2 v \otimes  v  \|_{L^{\frac{q}{2}}_{2\al} \dot B^{\al}_{\frac{p}2,q}}
\leq c R \eta(u,v).
\end{split}
\end{align}

Combining these estimates with \eqref{0720-1}, we obtain  \eqref{E43}.
Moreover, if $R < \frac1{C_2}$, then $S : {\mathcal T}_R \ri {\mathcal T}_R $ is contractive.

\subsection{Existence}

By the Banach fixed point theorem, we have obtained a  solution $u$ to \eqref{E40} in ${\mathcal C}_R$.
Now, we define the stopping time 
\[
\tau(\om) = \inf \{0 \le T \le \infty\, : \eta_u(t)\ge 2R \}.
\]
Then, $(u, \tau)$ is the   solution in the sense of Definition \ref{D21-2}. Notice from definition of $\tau$ that for all $h \in (0,\infty)$, we have 
\[
\{\om \, | \, \tau(\om) \leq h\} = \{ \om \, | \, \eta_u(h)\geq 2R\}.
\]
Using the Chebyshev inequality,  for all $h \in (0,\infty)$, we have 
\[
{\mathbb P} (\tau = 0)
\le {\mathbb P} (\{\tau \leq h\})
\leq \frac1{R} {\mathbb E} \eta_u(h).
\]

By the definition of the stopping time $\tau$ and \eqref{E41},
\begin{align*}
{\mathbb P}(\tau < \infty)
&= {\mathbb E} 1_{\tau < \infty }
\le {\mathbb E} \Bigg(1_{\tau < \infty } \frac{\eta_u (\infty)}{R}\Bigg)  \le \frac{ c \| u\|_{{\mathcal X}} }{R}
\le \frac{5C_1 R^2}{R} = 5C_1 R.
\end{align*}
If $R < \frac{1}{c} \wedge \frac{\ep }{5C_1}$, then
\[
{\mathbb P} (\tau = \infty) = 1 -{\mathbb P} (\tau < \infty)
\geq 1 - \ep.
\]
This completes the proof of the existence in Theorem \ref{T1}.
\qed

\subsection{Uniqueness }
Let $ u_1\in {\mathcal X} $ be another weak solution of of the system \eqref{maineq2} with pressure $p_1$ satisfying $\| u \|_{{\mathcal X} } < c R^2$, where $R$ is found in Subsection \ref{subsection0414}. Let $U = u - u_1$ and $P = p -p_1$. According,
 $(U, P)$ satisfies the equations
\begin{align*}
U_t - \De U + \na P& =-\mbox{div}(u\otimes u+u_1\otimes u_1) -\mbox{div}( \si(D u) Du -\si( D u_1) D u_1),\\
 {\rm div} \, U& =0,
 \mbox{ in }\R\times (0,\infty),\\
 U|_{t=0}= 0.&
\end{align*}

The estimate \eqref{E43} implies that
\begin{align*}
 \| U \|_{ {\mathcal X} }
 \leq c R \| U\|_{ {\mathcal X} }
 < \| U \|_{{\mathcal X} }.
 \end{align*}
This implies that $u \equiv u_1$ in $\R \times (0, \infty)$. Thus, the proof of the uniqueness of Theorem \ref{T1} is successfully proved.

\section*{Funding}

T. C. has been supported by the National Research Foundation of Korea(NRF) grant funded by the Korean government(MSIT) (No. RS-2023-00244630).
M. Y. has been supported by the National Research Foundation of Korea(NRF) grant funded by the Korean government(MSIT) (No. 2021R1A2C4002840).

\appendix

\section{Proof of Lemma  \ref{L23} }
\label{prooflemmaL23}
\setcounter{equation}{0}

Let $\| {\mathbb G}|_{L^\infty}, \,\, \| {\mathbb H}\|_{L^\infty} \leq \frac12 r_0$.  Using \eqref{0307-1} and mean-value Theorem, we have 
\begin{align}\label{0114-1-1}
\begin{split}
\|\rho ({\mathbb G} )\|_{\dot B^{ s}_{p,q} }  & 
\approx 
\left( \int_{\Rn} \frac1{|y|^{n + qs}} \left( \int_{\Rn} |\rho({\mathbb G}  (y + x)) -\rho({\mathbb G} (x)) |^p  dx \right)^{\frac{q}p} dy \right)^\frac1q\\
 & 
\leq c    \| (D \rho)({\mathbb G}  ) \|_{L^\infty} 
\left( \int_{\Rn} \frac1{|y|^{n + qs}} \left( \int_{\Rn} |{\mathbb G}  (y + x)- {\mathbb G} (x)   |^p  dx \right)^{\frac{q}p} dy \right)^\frac1q\\
 & 
\leq c   (D\rho)^* ( r_0 )   \| {\mathbb G}\|_{\dot B^s_{p,q} (\R)}.
\end{split}
\end{align}
We complete the proof of $\eqref{0114-1}_1$.

Next, we prove  $\eqref{0114-1}_2$.   By direct caculation, we have 
\begin{align*}
\begin{split}
&\rho (  {\mathbb G}(x+y )) -\rho(  {\mathbb H}(x +y ) ) - \Big( \rho(  {\mathbb G}(x )) - \rho (  {\mathbb H}(x ) )  \Big)\\
& = \int_0^1 (D\rho)  (\te  {\mathbb G}(x+y ) + (1 -\te)  {\mathbb H}(x +y ) ): \Big( {\mathbb G}(x+y ) -  {\mathbb H}(x+y ) \Big) d\te\\
& \quad - \int_0^1  (D\rho ) (\te  {\mathbb G}(x ) + (1 -\te)  {\mathbb H}(x ) ): \Big( {\mathbb G}(x ) -  {\mathbb H}(x ) \Big) d\te\\
& = \int_0^1  \Big( ( D\rho) (\te  {\mathbb G}(x+y ) + (1 -\te)  {\mathbb H}(x +y ) ) - (D\rho) (\te  {\mathbb G}(x ) + (1 -\te)  {\mathbb H}(x ) )   \Big)\\
& \quad : \Big( {\mathbb G}(x+y ) -  {\mathbb H}(x+y ) \Big) d\te\\
&\quad  +  \int_0^1 (D\rho )(\te  {\mathbb G}(x ) + (1 -\te)  {\mathbb H}(x ) ): \Big({\mathbb G}(x+y ) -  {\mathbb H}(x+y ) - {\mathbb G}(x ) +  {\mathbb H}(x ) \Big) d\te\\
& = I_1 + I_2.
\end{split}
\end{align*}
Here, 
\begin{align*}
|I_2| \leq  c(D\rho)^* (r_0)    \Big| {\mathbb G}(x+y ) -  {\mathbb H}(x+y ) - {\mathbb G}(x ) +  {\mathbb H}(x )  \Big|.  
\end{align*}

For $I_1$, 
\begin{align*}
&  (D \rho) \big(\te  {\mathbb G}(x+y ) + (1 -\te)  {\mathbb H}(x +y ) \big) - (D\rho)\big(\te  {\mathbb G}(x ) + (1 -\te)  {\mathbb H}(x ) \big)   \\
& = \int_0^1 \frac{d}{d \ga}  (D \rho) \Big(  \ga \big( \te  {\mathbb G}(x+y ) + (1 -\te)  {\mathbb H}(x +y )    \big) + (1 -\ga) \big(  \te  {\mathbb G}(x ) + (1 -\te)  {\mathbb H}(x ) \big)     \Big) d\ga\\
& = \int_0^1    ( D^2 \rho ) \Big(  \ga \big( \te  {\mathbb G}(x+y ) + (1 -\te)  {\mathbb H}(x +y )    \big) + (1 -\ga) \big(  \te  {\mathbb G}(x ) + (1 -\te)  {\mathbb H}(x ) \big)     \Big)\\
& :: \Big(\te\big(  {\mathbb G}(x+y )  -   {\mathbb G}(x )  \big) + (1 -\te) \big(   {\mathbb H}(x+y )  -   {\mathbb H}(x )    \big)   \Big) d\ga.
\end{align*}
Hence,  we have 
\begin{align*}
|I_1| & \leq c  (D^2\rho)^* (r_0)   
\Big( \big|  {\mathbb G}(x+y )  -   {\mathbb G}(x ) \big| + \big|     {\mathbb H}(x+y )  -   {\mathbb H}(x )  \big|  \Big) \big|   {\mathbb G}(x+y ) -   {\mathbb H}(x+y  ) \big|   \Big).
\end{align*}
Summing all  estimate, we have 
\begin{align*}
 \| \rho (  {\mathbb G}) -\rho (   {\mathbb H})\|_{\dot B^{s }_{p,q} (\R ) }& \leq c    \Big(   \int_{{\mathbb R}^{n}}  
\frac1{|y|^{n+qs  } } \\
& \quad  \times  \big( \int_{{\mathbb R}^{n }}  |\rho ({\mathbb G}(x+y) )- \rho( {\mathbb H}(x +y ))  - \rho ({\mathbb G}(x) ) + \rho  ({\mathbb H} (x ))  |^p  dx \big)^{\frac{q}p}  dy  \Big)^\frac1q\\
& \leq  c \Big(  (D\rho)^* (r_0)   \| {\mathbb G} - {\mathbb H}\|_{\dot B^{s}_{p,q} (\R)}\\
& \qquad  + (D^2\rho)^* ( r_0 )   \big( \|{\mathbb G}\|_{\dot B^{s}_{p,q} (\R)}   +    \|{\mathbb H}\|_{\dot B^{s}_{p,q} (\R)}    \big)    \| {\mathbb G}-{\mathbb H}\|_{L^\infty (\R)}  \Big).
\end{align*}
 We complete the proof of Lemma  \ref{L23}.

\section{Proof of Theorem \ref{theo0328-1}}
\label{prooftheorem0328-1}
\setcounter{equation}{0}

Note that for $ 1 < p_1 < p < \infty$,  from Young's inequality, we have 
\begin{align*}
\| w^2(t)\|_{L^p} & \leq \int_0^t (t -s)^{-\frac12 -\frac{n}{2p_1} +\frac{n}{2p}} \| {\mathcal F} (s) \|_{L^{p_1}} ds.
\end{align*}
From Theorem 1.2 in \cite{DM}, for $\frac1q + a +1 = \frac1{q_1} + a_1 +\frac12 +\frac{n}{2p_1} -\frac{n}{2p}$, we have 
\begin{align}
\| w^2\|_{L^q_a L^p} \leq c \|{\mathcal F}\|_{L^{q_1}_{a_1} L^{p_1}}.
\end{align}
We complete the proof of  \eqref{0307-2}.

 Let $ 0 \leq k_1 < \be_1  <  k_2  < \be <  k_3$ for some $ k_i \in {\mathbb N} \cup \{0\}$. 
Note that for $ 1 \leq p_1 \leq p \leq \infty$ and $ 0 \leq k_1 \leq k_2, \,\, k_i \in {\mathbb N} \cup \{0\}$,
\begin{align}
\begin{split}
\| D^{k_3}_x  \na\Ga_{t} *  f\|_{L^p} \leq c  t^{-\frac12  -\frac{k_3}2 +\frac{k_2}2 +\frac{n}{2p} -\frac{n}{2p_1}}  \| D^{k_2}_xf \|_{L^{p_1}},\\
\| D^{k_2}_x \na\Ga_{t} *  f\|_{L^p} \leq c  t^{-\frac12  -\frac{k_2}2 +\frac{k_1}2 +\frac{n}{2p} -\frac{n}{2p_1}}  \| D^{k_1}_xf \|_{L^{p_1}}.
\end{split}
\end{align}
Using the real interpolation and complex interpolation (see (1) of Proposition \ref{prop0215}), we have 
\begin{align}\label{250122-2}
\begin{split}
  \| \na \Ga_{t} * f\|_{\dot B^\be_{p,q} } \leq     c  t^{-\frac12 -\frac{\be}2 +\frac{\be_1}2 -\frac{n}{2p_1} + \frac{n}{2p} } \| f\|_{\dot B^{\be_1}_{p_1,q} },\quad  \| \na \Ga_{t} * f\|_{\dot W^\be_{p} }  \leq    c  t^{-\frac12 -\frac{\be}2 +\frac{\be_1}2 -\frac{n}{2p_1} + \frac{n}{2p} } \| f\|_{\dot H^{\be_1}_{p_1} }. 
 \end{split}
\end{align}

Then, for $ 1 <  p_1 \leq p < \infty$, $ 1  \leq  q \leq \infty$ and $ 0 < \be_1 \leq \be$, we have 
\begin{align}
\begin{split}
\| w^2 (t) \|_{\dot B^{\be}_{p,q} } & \leq \int_0^t \| \na \Ga_{t-s} *{\mathcal F} (s)\|_{\dot B^\be_{p,q} } ds \leq \int_0^t (t-s)^{-\frac12 -\frac{\be}2 +\frac{\be_1}2 -\frac{n}{2p_1} + \frac{n}{2p} } \| {\mathcal F}(s)\|_{\dot B^{\be_1}_{p_1,q} } ds. 
 \end{split}
\end{align}
Hence, for $ \frac1q +1 +a = \frac1{q_1} + a_1 +\frac12 +\frac{\be}2 -\frac{\be_1}2 +\frac{n}{2p_1} - \frac{n}{2p} $, $ q_1 \leq q, \,\, p_1 \leq p, \,\, 0 \leq a_1 \leq a$, we have 
\begin{align}
\| w^2\|_{L^q_a  \dot B^\be_{p,q} } \leq c \| {\mathcal F}\|_{ L^{q_1}_{a_1} \dot B^{\be_1}_{p_1,q}  }.
\end{align}
This implies \eqref{250122-1}.

Note that  for $ \leq \be \leq 2$, 
\begin{align}
 \widehat{D_{x_i} \na \Ga_t * f  }(\xi) =  \frac{\xi}{|\xi|} \frac{ \xi_k}{|\xi|}  |\xi|^{2-\be}  e^{-t |\xi|^2}  |\xi|^\be  \hat f(\xi). 
\end{align}
Hence, we have 
\begin{align}
D_{x_i} \na \Ga_t * f  (x) =     R_i I_{ 2 -\be} \Ga_t  *  R I_\be  f (x), 
\end{align}
where $R = (R_1, \cdots, R_n)$ is Riesz tranform and $ I_\be g$ is defined in \eqref{0219-1}.

Let $ 1 < p_1, \,\, q_1 < \infty$, $ 0 \leq a_1$ and $ 0 <\be$ such that  $ \frac{1}2  \be= \frac{n}{2p_1} +\frac1{q_1} +a_1 $. From $\eqref{250122-2}_2$, we have  
\begin{align}
\begin{split}
  \|D_{x_i} w^2  (t)\|_{L^\infty} &\leq   c_n \int_0^t \|   R_i I_{ 2 -\be} \Ga_{t-s} *   R_k I_\be {\mathcal F}(s)\|_{L^\infty} ds\\
& \leq \int_0^t \| \Ga ( t-s)\|_{\dot W^{2 -\be}_{p_1'}} \| {\mathcal F}(s)\|_{\dot W^\be_{p_1}}ds\\
& \leq c \int_0^t ( t-s)^{-1 +\frac12\be - \frac{n}{2p_1}} \| {\mathcal F}(s)\|_{\dot W^\be_{p_1 }} ds\\
& \leq c \| {\mathcal F}\|_{L^{q_1}_{a_1}(0, \infty; \dot H^\be_{p_1 } )} \Big(\int_0^t ( t-s)^{ (-1 +\frac12\be - \frac{n}{2p_1}) q'_1} s^{-a_1 q'_1} ds \Big)^{\frac1{q'_1}}\\
& \leq c \|{\mathcal F}\|_{L^{q_1}_{a_1}  \dot W^\be_{p_1 } }.
\end{split}
\end{align}

Hence, for $ \frac{1}2 \be  = \frac{n}{2p_1} +\frac1{q_1} +a_1 $, we have 
\begin{align}\label{240124-1}
\| D w^2 \|_{ L^\infty L^\infty} 
& \leq c \|{\mathcal F}\|_{ L^{q_1}_{a_1}   \dot W^\be_{p_1} }.
\end{align}
This implies \eqref{0122-3}.

\section{Proof of Theorem \ref{theorem0410-1} }
\label{proofoflemma0719-1}
\setcounter{equation}{0}

See Lemma 4.3 in \cite{CY} for the proof of \eqref{w3}.

From the definition of Besov space, we have 
\begin{align*}
{\mathbb E} \| w_3\|_{L^q_a  ; \dot B^\al_{p, q} }^q = \sum 2^{q\al k} {\mathbb E} \int_0^\infty t^{a q}\|w_3 * \phi_k\|_{L^p}^q dt. 
\end{align*}

We now recall the following estimate 
\begin{align}\label{0316-1}
 \big(\int_{\R} |  \Ga_{t-s} * g* \phi_k(x,s) dx|^p \big)^\frac1p \leq ce^{-c_0(t-s)2^{2k}} \|g* \phi_k(s)\|_{L^{p}}.
\end{align}
 
From Theorem 1.1 in \cite{JML}  and Minkowski's integral inequality, we have
\begin{align*}
{\mathbb E} \int_0^\infty t^{a q} \|w_3 * \phi_k(t)\|_{L^p}^q dt
&\leq c {\mathbb E} \int_0^\infty t^{a q} \left(\int_{\R} \int_0^t |\Ga_{t-s} * g* \phi_k(x,s)\big)^2 ds|^\frac{p}2 dx \right)^{\frac{q}p} dt \\
&\leq c{\mathbb E} \int_0^\infty t^{a q} \left(\int_0^t \big(\int_{\R} |\Ga_{t-s} * g* \phi_k(x,s)|^pdx\big)^\frac2p ds\right)^\frac{q}2 dt\\
&\leq c{\mathbb E}\int_0^\infty t^{a q}\left(\int_0^t e^{- c_0 (t-s)2^{2k}} \| g * \phi_k(s)\|^2_{L^{p}(\R)} ds\right)^\frac{q}2 dt.
\end{align*}

If $ q =2$ and $ a =0$, then
\begin{align*}
{\mathbb E} \int_0^\infty   \|w_3 * \phi_k(t)\|_{L^p}^2 dt
&\leq c{\mathbb E}\int_0^\infty  \| g * \phi_k(s)\|^2_{L^{p}(\R)}  \int_s^\infty    e^{- c_0 (t-s)2^{2k}} dtds \\
&\leq c{\mathbb E}\int_0^\infty 2^{-qk} \| g * \phi_k(s)\|^2_{L^{p}(\R)} ds. 
\end{align*}

Let $ q > 2$ and $ 0 \leq a < \frac12 -\frac1q$. 
We fix $k \in {\mathbb Z}$.   
\begin{align*}
\int_0^t s^{-2a \frac{q}{q-2}} e^{-c_0\frac{q}{q-2}(t-s)2^{2k}} ds  & \leq c \Big(  t^{-2a \frac{q}{q-2}}\int_{\frac{t}2}^t  e^{-c_0\frac{q}{q-2}(t-s)2^{2k}} ds + e^{-\frac{c_0}2 \frac{q}{q-2}t2^{2k}} \int_0^{\frac{t}2} s^{-2a \frac{q}{q-2}}  ds \Big)\\
& \leq c \Big(  t^{-2a \frac{q}{q-2}} \min(t, 2^{-2k})  + e^{-\frac{c_0}2 \frac{q}{q-2}t2^{2k}}  t^{-2a \frac{q}{q-2} +1 }   \Big)\\
& \leq c  t^{-2a \frac{q}{q-2}} \min(t, 2^{-2k}).
\end{align*}
Hence, we have 
\begin{align*}
{\mathbb E} \int_0^\infty t^{a q} \|w_3 * \phi_k(t)\|_{L^p}^q dt
 & \leq c \int_0^\infty t^{a q} \big( \int_0^{t  }  s^{-2a \frac{q}{q-2}} e^{-c_0 \frac{q}{q-2}(t-s)2^{2k}} ds\big)^{\frac{q}2 ( 1 -\frac2{q} ) } \\
 & \qquad 
 \times  \int_0^t s^{a q} e^{-c_0 q(t-s)2^{2k}} \| g * \phi_k(s)\|^q_{L^{p}(\R)} ds dt\\
 & \leq c \int_0^\infty  \min(t, 2^{-2k})^{ \frac{q}2 -1}
 \int_0^t s^{a q } e^{-c_0 \frac{q}{q-2}(t-s)2^{2k}} \| g * \phi_k(s)\|^q_{L^{p}(\R)} ds dt\\
& \leq c \int_0^\infty s^{a q}  \| g * \phi_k(s)\|^q_{L^{p}(\R)} \int_s^\infty   \min(t, 2^{-2k})^{ \frac{q}2 -1} e^{-c_0\frac{q}{q-2}(t-s)2^{2k}}   dtds \\
& \leq c \int_0^\infty s^{a q}  \| g * \phi_k(s)\|^q_{L^{p}(\R)} \int_s^\infty   2^{-(q -2)k}   e^{-c_0\frac{q}{q-2}(t-s)2^{2k}}   dtds \\
& \leq c \int_0^\infty s^{aq} 2^{- q k } \| g * \phi_k(s)\|^q_{L^{p}(\R)} ds.
\end{align*}

Combining all estimate, we obtain that
\begin{align*}
\|w_3\|_{{\mathcal L} ^q_a(( 0, \infty |],{\mathcal P};\dot B^\al_{pq}(\R))}
\lesssim \|g\|_{{\mathcal L}_{a}^{q}(( 0, \infty |], {\mathcal P}; \dot B^{\al-1}_{pq} (\R))}.
\end{align*}
We complete the proof of the first quantity of Theorem \ref{theorem0410-1}.

Since $\dot B^{\frac{n}{p} }_{p,1} = ( L^{p}, \dot W^1_{p})_{p, 1 - \frac{n}{p}}$ for $ n< p $, we have 
\begin{align*}
\| D w^3 (t)\|_{L^\infty} \leq c \| D w^3(t) \|_{\dot B^{\frac{n}{p} }_{p,1}} \leq c \|D w^3(t) \|_{L^{p} }^{ 1 -\frac{n}{p}} \| D w^3(t) \|_{\dot W^1_{ p }}^{\frac{n}{p}} \leq c \big(\| D w^3(t) \|_{L^{p} } + \| D w^3(t) \|_{\dot W^1_{p} } \big).
\end{align*}
Hence, we have 
\begin{align}\label{0412-1}
{\mathbb E}\| D w^3\|_{L^\infty  } & \leq \Big( {\mathbb E} \sup_{ 0 < t < \infty} \big(\|D w^3(t) \|^q_{L^{p} } + \| D w^3(t) \|^q_{\dot W^1_{p} } \big) \Big)^\frac1q.
\end{align}
From Theore 1.1 in  \cite{VWK}, for $ 2 \leq q < \infty$, we have 
\begin{align} 
\begin{split}
{\mathbb E} \sup_{ 0 < t < \infty} \| D w^3(t) \|^q_{L^{p} } 
 & \leq c   {\mathbb E} \int_0^\infty  \big( \int_{\R}| \int_0^t |\Ga_{t-s}* D g(s,x)|^2 ds|^{\frac{p}2} dx \big)^{\frac{q}p} dt.
\end{split}
 \end{align}
 Applying the proof of   Lemma 4.3 in \cite{CY}, we obtain  
 \begin{align}\label{0412-2}
\begin{split}
{\mathbb E} \sup_{ 0 < t < \infty} \| D w^3(t) \|^q_{L^{p} } 
 &   \leq c {\mathbb E} \int_0^\infty \|  D g(s)  \|_{L^p}^q ds.
\end{split}
 \end{align}
 
 Similarly, we have 
 \begin{align}\label{0412-3}
 {\mathbb E} \sup_{ 0 < t < \infty} \| D w^3(t) \|^q_{\dot H^1_{p} } & 
 \leq c {\mathbb E} \int_0^\infty \| D^2_x g(s) \|_{L^p}^{p} ds.
\end{align}
From \eqref{0412-1} to \eqref{0412-3}, we have 
\begin{align}
{\mathbb E} \| D w^3\|_{L^\infty L^\infty  } & \leq c \Big( {\mathbb E} \int_0^\infty \Big( \|   D_x g(s) \|_{L^p}^q + \| D^2_x g(s)  \|_{L^p}^q \Big) ds \Big)^\frac1q.
\end{align}
We complete the proof of the second quantity of Theorem \ref{theorem0410-1}.

\section{Proof of Lemma \ref{lemma0719-2}}
\label{proofoflemma0719-2}
\setcounter{equation}{0}

Let $ \eta_v( t) : = \|v(\om)\|_{L^q_\al (0,t; \dot B^\al_{p,q} (\R))} + \|Dv(\om)\|_{L^\infty_\al (0,t; L^\infty (\R))}+ \|v(\om)\|_{L^q_\al (0,t; L^p (\R))}.$

Note first that $\chi_{v(\om)} (t) = 1$ if $ \eta_v (t) \leq R$ and that $\chi_{v(\om)} (t) = 0$ if $ \eta_v(t)\ge 2R$.
To prove the claim, we may consider the following six cases.
\begin{enumerate}
\item
If $\eta_u(t) \leq \eta_v (t) \le R$, then $|\chi_u - \chi_v| = |1-1| = 0$.
\item
If $\eta_u(t) \leq R \le \eta_v (t) \le 2R$, then
\begin{align*}
|\chi_u - \chi_v|
&= |1-(2-R^{-1}\eta_v(t))| \\
&= R^{-1}|\eta_v(t)-R| \\
&\leq R^{-1}|\eta_v(t)-\eta_u(t) |.
\end{align*}
\item
If $\eta_u(t) \leq R \leq 2R \le \eta_v(t) $, then
\begin{align*}
|\chi_u - \chi_v|
&= |1-0| \le \frac{\eta_v(t)-\eta_u(t)}{2R-R} \\
&\leq R^{-1} (\eta_v(t)-\eta_u(t) ).
\end{align*}
\item
If $R \leq \eta_u(t) \le \eta_v(t) \leq 2R$, then
\begin{align*}
|\chi_u - \chi_v|
&= |(2-R^{-1}\eta_u(t))-(2-R^{-1}\eta_v(t))| \\
&= R^{-1}|\eta_v(t)-\eta_v(t)|.
\end{align*}
\item
If $R \le \eta_u(t) \leq 2R \leq \eta_v(t)$, then
\begin{align*}
|\chi_u - \chi_v|
&= |(2-R^{-1}\eta_u(t))-0| \\
&= R^{-1}|2R-\eta_u(t)| \\
&\le R^{-1}|\eta_v(t)-\eta_u(t)|.
\end{align*}
\item
If $2R \le \eta_u(t) \le \eta_v(t)$, then $|\chi_u - \chi_v| = |0-0| = 0$.
\end{enumerate}
Thus, \eqref{E42} always holds.

\end{document}